\date{\today}
\newtheorem{theorem}{Theorem}[section]
\newtheorem{proposition}[theorem]{Proposition}
\newtheorem{lemma}[theorem]{Lemma}
\theoremstyle{definition}
\newtheorem{definition}[theorem]{Definition}
\newtheorem{remark}[theorem]{Remark}
\begin{document}

\title[On a semitopological  semigroup $\boldsymbol{B}_{\omega}^{\mathscr{F}}$  ...]{On a semitopological  semigroup $\boldsymbol{B}_{\omega}^{\mathscr{F}}$  when a family $\mathscr{F}$ consists of inductive non-empty subsets of~$\omega$}

\author[Oleg Gutik and Mykola Mykhalenych]{Oleg Gutik and Mykola Mykhalenych}
\address{Faculty of Mechanics and Mathematics,
Ivan Franko National University of Lviv, Universytetska 1, Lviv, 79000, Ukraine}
\email{oleg.gutik@lnu.edu.ua,
ogutik@gmail.com, myhalenychmc@gmail.com}

\keywords{Semitopological semigroup, topological semigroup, bicyclic monoid, inverse semigroup, closure, compact, locally compact, discrete.}

\subjclass[2020]{Primary 22A15, 20A15,  Secondary 54D10, 54D30, 54H12}

\begin{abstract}
Let $\boldsymbol{B}_{\omega}^{\mathscr{F}}$ be the bicyclic semigroup extension for the family $\mathscr{F}$ of ${\omega}$-closed subsets of $\omega$ which is introduced in \cite{Gutik-Mykhalenych=2020}.
We study topologizations of the semigroup $\boldsymbol{B}_{\omega}^{\mathscr{F}}$ for the family $\mathscr{F}$ of inductive ${\omega}$-closed subsets of $\omega$. We generalize Eberhart-Selden and Bertman-West results about topologizations of the bicyclic semigroup \cite{Bertman-West-1976, Eberhart-Selden=1969} and show that every Hausdorff shift-continuous topology on the semigroup $\boldsymbol{B}_{\omega}^{\mathscr{F}}$ is discrete and if a Hausdorff semitopological semigroup $S$ contains $\boldsymbol{B}_{\omega}^{\mathscr{F}}$ as a proper dense subsemigroup then $S\setminus\boldsymbol{B}_{\omega}^{\mathscr{F}}$ is an ideal of $S$. Also, we prove the following dichotomy: every Hausdorff locally compact shift-continuous topology on $\boldsymbol{B}_{\omega}^{\mathscr{F}}$ with an adjoined zero is either compact or discrete. As a consequence of the last result we obtain that every Hausdorff locally compact semigroup topology on $\boldsymbol{B}_{\omega}^{\mathscr{F}}$ with an adjoined zero is discrete and every Hausdorff locally compact shift-continuous topology on the semigroup $\boldsymbol{B}_{\omega}^{\mathscr{F}}\sqcup I$ with an adjoined compact ideal $I$ is either compact or the ideal $I$ is open, which extent many results about locally compact topologizations of some classes of semigroups onto extensions of the semigroup $\boldsymbol{B}_{\omega}^{\mathscr{F}}$.
\end{abstract}

\maketitle


\section{Introduction}\label{section-1}

We shall follow the terminology of~\cite{Carruth-Hildebrant-Koch-1983, Clifford-Preston-1961, Clifford-Preston-1967, Engelking-1989, Ruppert-1984}. By $\omega$ we denote the set of all non-negative integers.

Let $\mathscr{P}(\omega)$ be  the family of all subsets of $\omega$. For any $F\in\mathscr{P}(\omega)$ and integer $n$ we put
\begin{equation*}
nF=\{n+k\colon k\in F\}.
\end{equation*}
A family $\mathscr{F}\subseteq\mathscr{P}(\omega)$ is called \emph{${\omega}$-closed} if $F_1\cap(-n+F_2)\in\mathscr{F}$ for all $n\in\omega$ and $F_1,F_2\in\mathscr{F}$.

A semigroup $S$ is called {\it inverse} if for any
element $x\in S$ there exists a unique $x^{-1}\in S$ such that
$xx^{-1}x=x$ and $x^{-1}xx^{-1}=x^{-1}$. The element $x^{-1}$ is called the {\it inverse of} $x\in S$.


If $S$ is an inverse semigroup then the semigroup operation on $S$ determines the following partial order $\preccurlyeq$
on $S$: $s\preccurlyeq t$ iff there exists $e\in E(S)$ such that $s=te$, for $s,t\in S$. This order is
called the {\em natural partial order} on $S$ \cite{Wagner-1952}.

The bicyclic monoid ${\mathscr{C}}(p,q)$ is the semigroup with the identity $1$ generated by two elements $p$ and $q$ and the condition $pq=1$. Thus each element of ${\mathscr{C}}(p,q)$) equals $q^mp^n$ for some $m,n\in\omega$ and the semigroup operation on ${\mathscr{C}}(p,q)$ can be described as follows
\begin{equation*}
    q^kp^l\cdot q^mp^n=q^{k+m-\min\{l,m\}}p^{l+n-\min\{l,m\}},
\end{equation*}
for each $k,l,m,n\in\omega$.
It is well known that the bicyclic monoid ${\mathscr{C}}(p,q)$ is a bisimple (and hence simple) combinatorial $E$-unitary inverse semigroup and every non-trivial congruence on ${\mathscr{C}}(p,q)$ is a group congruence \cite{Clifford-Preston-1961}.

On the other hand, we can we define the semigroup operation ``$\cdot$'' on the set $\boldsymbol{B}_{\omega}=\omega\times\omega$  in the following way
\begin{equation*}
  (i_1,j_1)\cdot(i_2,j_2)=
  \left\{
    \begin{array}{ll}
      (i_1-j_1+i_2,j_2), & \hbox{if~} j_1\leqslant i_2;\\
      (i_1,j_1-i_2+j_2), & \hbox{if~} j_1\geqslant i_2,
    \end{array}
  \right.
\end{equation*}
for each $i_1,i_2,j_1,j_2\in\omega$.
It is well-known that the semigroup $\boldsymbol{B}_{\omega}$ is isomorphic to the bicyclic monoid by the mapping $\mathfrak{h}\colon \mathscr{C}(p,q)\to \boldsymbol{B}_{\omega}$, $q^kp^l\mapsto (k,l)$ (see: \cite[Section~1.12]{Clifford-Preston-1961} or \cite[Exercise IV.1.11$(ii)$]{Petrich-1984}).

A {\it topological} ({\it semitopological}) {\it semigroup} is a topological space endowed with a continuous (separately continuous) semigroup operation. If $S$ is a~semigroup and $\tau$ is a topology on $S$ such that $(S,\tau)$ is a topological semigroup, then we shall call $\tau$ a \emph{semigroup} \emph{topology} on $S$, and if $\tau$ is a topology on $S$ such that $(S,\tau)$ is a semitopological semigroup, then we shall call $\tau$ a \emph{shift-continuous} \emph{topology} on~$S$. 

The well-known A.~Weil Theorem states that \emph{every locally compact monothetic topological group $G$} (i.e., $G$ contains a cyclic dense subgroup) \emph{is either compact or discrete} (see \cite{Weil-1938}). A semitopological semigroup $S$ is called \emph{monothetic} if it contains a cyclic dense subsemigroup. Locally compact and compact monothetic topological semigroups were studied by Hewitt \cite{Hewitt-1956},  Hofmann \cite{Hofmann-1960}, Koch  \cite{Koch-1957}, Numakura \cite{Numakura-1952} and others (for more related information see the books \cite{Carruth-Hildebrant-Koch-1986} and \cite{Hofmann-Mostert-1966}). Koch in \cite{Koch-1957} posed the following problem: ``\emph{If $S$ is a locally compact monothetic semigroup and $S$ has an identity, must $S$ be compact?}'' From the other hand, Zelenyuk in \cite{Zelenyuk-1988} constructed a countable monothetic locally compact topological semigroup without an identity which is neither compact nor discrete and in \cite{Zelenyuk-2019} he constructed a monothetic locally compact topological monoid with the same property. The topological properties of monothetic locally compact (semi)topological semigroups are studied in \cite{Banakh-Bardyla-Guran-Gutik-Ravsky-2020, Guran-Kisil-2012, Zelenyuk-2020, Zelenyuk-Zelenyuk-2020}.

In the paper \cite{Gutik=2015} it is proved that every Hausdorff locally compact shift-continuous topology on the bicyclic monoid with an adjoined zero is either compact or discrete. This result was extended by Bardyla to the polycyclic monoid \cite{Bardyla-2016} and graph inverse semigroups \cite{Bardyla-2018}, and by Mokrytskyi to the monoid of order isomorphisms between principal filters of $\mathbb{N}^n$ with an adjoined zero \cite{Mokrytskyi-2019}. Also, in \cite{Gutik-Maksymyk-2019} it is proved that the extended bicyclic semigroup $\mathscr{C}_\mathscr{\mathbb{Z}}^0$ with an adjoined zero admits continuum many different  shift-continuous topologies, however every Hausdorff locally compact semigroup topology on $\mathscr{C}_\mathscr{\mathbb{Z}}^0$ is discrete. In \cite{Bardyla=2021??} Bardyla proved that a Hausdorff locally compact semitopological McAlister semigroup $\mathcal{M}_1$ is either compact or discrete. However, this dichotomy does not hold for the McAlister semigroup $\mathcal{M}_2$ and moreover, $\mathcal{M}_2$ admits continuum many different Hausdorff locally compact inverse semigroup topologies \cite{Bardyla=2021??}. Also, different locally compact semitopological semigroups with zero were studied in \cite{Gutik-2018, Gutik-KhylynskyiP=2022, Maksymyk-2019}.

Next we shall describe the construction which is introduced in \cite{Gutik-Mykhalenych=2020}.

Let  $\mathscr{F}$ be an ${\omega}$-closed subfamily of $\mathscr{P}(\omega)$. We can we define the semigroup operation ``$\cdot$'' on the set $\boldsymbol{B}_{\omega}\times\mathscr{F}$  in the following way
\begin{equation*}
  (i_1,j_1,F_1)\cdot(i_2,j_2,F_2)=
  \left\{
    \begin{array}{ll}
      (i_1-j_1+i_2,j_2,(j_1-i_2+F_1)\cap F_2), & \hbox{if~} j_1\leqslant i_2;\\
      (i_1,j_1-i_2+j_2,F_1\cap (i_2-j_1+F_2)), & \hbox{if~} j_1\geqslant i_2,
    \end{array}
  \right.
\end{equation*}
for each $i_1,i_2,j_1,j_2\in\omega$.
In \cite{Gutik-Mykhalenych=2020} is proved that $(\boldsymbol{B}_{\omega}\times\mathscr{F},\cdot)$ is an inverse semigroup. Moreover, if a family  $\mathscr{F}$ contains the empty set $\varnothing$ then the set
$ 
  \boldsymbol{I}=\{(i,j,\varnothing)\colon i,j\in\omega\}
$ 
is an ideal of the semigroup $(\boldsymbol{B}_{\omega}\times\mathscr{F},\cdot)$. For any ${\omega}$-closed family $\mathscr{F}\subseteq\mathscr{P}(\omega)$ the following semigroup
\begin{equation*}
  \boldsymbol{B}_{\omega}^{\mathscr{F}}=
\left\{
  \begin{array}{ll}
    (\boldsymbol{B}_{\omega}\times\mathscr{F},\cdot)/\boldsymbol{I}, & \hbox{if~} \varnothing\in\mathscr{F};\\
    (\boldsymbol{B}_{\omega}\times\mathscr{F},\cdot), & \hbox{if~} \varnothing\notin\mathscr{F}
  \end{array}
\right.
\end{equation*}
is defined in \cite{Gutik-Mykhalenych=2020}. The semigroup $\boldsymbol{B}_{\omega}^{\mathscr{F}}$ generalizes the bicyclic monoid and the countable semigroup of matrix units. In \cite{Gutik-Mykhalenych=2020} it is provd that $\boldsymbol{B}_{\omega}^{\mathscr{F}}$ is a combinatorial inverse semigroup and Green's relations, the natural partial order on $\boldsymbol{B}_{\omega}^{\mathscr{F}}$ and its set of idempotents are described. The criteria of simplicity, $0$-simplicity, bisimplicity, $0$-bisimplicity of the semigroup $\boldsymbol{B}_{\omega}^{\mathscr{F}}$ and when $\boldsymbol{B}_{\omega}^{\mathscr{F}}$ has the identity, is isomorphic to the bicyclic semigroup or the countable semigroup of matrix units are given. In particularly in \cite{Gutik-Mykhalenych=2020} is proved that the semigroup $\boldsymbol{B}_{\omega}^{\mathscr{F}}$ is isomorphic to the semigrpoup of ${\omega}{\times}{\omega}$-matrix units if and only if $\mathscr{F}$ consists of a singleton set and the empty set.

A subset $F$ of $\omega$ is called \emph{inductive}  if $n+1\in F$ provided $n\in F$. It is obvious that the empty set $\varnothing$ is inductive. Also, a set $F\subseteq \omega$ is inductive  if and only if $F\subseteq -1+F$ (see \cite[Lemma~6]{Gutik-Mykhalenych=2020}). If $\mathscr{F}$ is a family in $\mathscr{P}(\omega)$ of non-empty inductive subsets then the semigroup $\boldsymbol{B}_{\omega}^{\mathscr{F}}$ is a simple monoid (see \cite[Corollary~2 and Theorem~4]{Gutik-Mykhalenych=2020}). Group congruences on the semigroup  $\boldsymbol{B}_{\omega}^{\mathscr{F}}$ and its homomorphic retracts when the family $\mathscr{F}$  consists of inductive non-empty subsets of $\omega$ are studied in \cite{Gutik-Mykhalenych=2021}. It is proven that a congruence $\mathfrak{C}$ on $\boldsymbol{B}_{\omega}^{\mathscr{F}}$ is a group congruence if and only if its restriction on a subsemigroup of $\boldsymbol{B}_{\omega}^{\mathscr{F}}$, which is isomorphic to the bicyclic semigroup, is not the identity relation. Also, all non-trivial homomorphic retracts of the semigroup $\boldsymbol{B}_{\omega}^{\mathscr{F}}$ are described.

By Proposition~1 of \cite{Gutik-Mykhalenych=2021} for any $\omega$-closed family $\mathscr{F}$ of inductive subsets in $\mathscr{P}(\omega)$ there exists an $\omega$-closed family $\mathscr{F}^*$ of inductive subsets in $\mathscr{P}(\omega)$ such that $[0)\in \mathscr{F}^*$ and the semigroups $\boldsymbol{B}_{\omega}^{\mathscr{F}}$ and $\boldsymbol{B}_{\omega}^{\mathscr{F}^*}$ are isomorphic. Hence without loss of generality we may assume that the family $\mathscr{F}$ contains the set $[0)$. Also, $\omega$-closeness of $\mathscr{F}$ implies that if $[k)\in\mathscr{F}$ for some $k\in\omega$ then $[l)\in\mathscr{F}$ for all $l\leqslant k$ with $l\in\omega$.

In this paper we extend the results of \cite{Bertman-West-1976, Eberhart-Selden=1969, Gutik=2015} onto the semigroup $\boldsymbol{B}_{\omega}^{\mathscr{F}}$ for an $\omega$-closed family $\mathscr{F}$ of inductive nonempty subsets of $\omega$. In particular we show that every Hausdorff shift-continuous topology  on the semigroup $\boldsymbol{B}_{\omega}^{\mathscr{F}}$ is discrete and if a Hausdorff semitopological semigroup $S$ contains $\boldsymbol{B}_{\omega}^{\mathscr{F}}$ as a proper dense subsemigroup then $S\setminus\boldsymbol{B}_{\omega}^{\mathscr{F}}$ is an ideal of $S$. Also, we prove that every Hausdorff locally compact shift-continuous topology on $\boldsymbol{B}_{\omega}^{\mathscr{F}}$ with an adjoined zero is either compact or discrete.


\section{On a topologization and a closure of the monoid $\boldsymbol{B}_{\omega}^{\mathscr{F}}$}\label{section-2}

Later we shall need the following proposition from \cite{Gutik-Mykhalenych=2020}, which describes the natural partial order on the semigroup $\boldsymbol{B}_{\omega}^{\mathscr{F}}$ in the general case of $\mathscr{F}$.

\begin{proposition}[\!\!{\cite[Proposition~6]{Gutik-Mykhalenych=2020}}]\label{proposition-2.1}
Let $(i_1,j_1,F_1)$ and $(i_2,j_2,F_2)$ be non-zero elements of the semigroup $\boldsymbol{B}_{\omega}^{\mathscr{F}}$. Then  $(i_1,j_1,F_1)\preccurlyeq(i_2,j_2,F_2)$ if and only if $F_1\subseteq -k+F_2$ and $i_1-i_2=j_1-j_2=k$ for some $k\in\omega$.
\end{proposition}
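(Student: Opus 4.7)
The plan is to unpack the definition of the natural partial order on an inverse semigroup, namely $s\preccurlyeq t$ iff $s=te$ for some $e\in E(\boldsymbol{B}_{\omega}^{\mathscr{F}})$, and match it against the explicit multiplication rule given just before the statement.

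The first preliminary step is to identify the non-zero idempotents of $\boldsymbol{B}_{\omega}^{\mathscr{F}}$. Squaring $(i,j,F)$ in either branch of the product formula and comparing the first two coordinates forces $i=j$, after which the third coordinate reduces to $F\cap F=F$. Hence every non-zero idempotent has the form $(i,i,F)$ with $F\in\mathscr{F}\setminus\{\varnothing\}$, and the rest of the argument amounts to understanding the right action of such an idempotent on an arbitrary non-zero triple $(i_2,j_2,F_2)$.

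For the sufficiency direction, given $k:=i_1-i_2=j_1-j_2\geq 0$ and $F_1\subseteq -k+F_2$, I would test the idempotent $(j_1,j_1,F_1)$. Since $j_2\leq j_1$, the first branch of the multiplication rule applies and produces
\[
(i_2,j_2,F_2)\cdot(j_1,j_1,F_1)=\bigl(i_2-j_2+j_1,\,j_1,\,(j_2-j_1+F_2)\cap F_1\bigr),
\]
and the hypothesis $F_1\subseteq -k+F_2=j_2-j_1+F_2$ collapses the intersection to $F_1$; combined with $i_2-j_2+j_1=i_1$, this gives exactly $(i_1,j_1,F_1)$, as required.

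For the necessity direction, I would write $(i_1,j_1,F_1)=(i_2,j_2,F_2)\cdot(l,l,G)$ for some idempotent $(l,l,G)$ and split on the sign of $l-j_2$. In the case $j_2\leq l$, the product equals $(i_2-j_2+l,\,l,\,(j_2-l+F_2)\cap G)$, from which one reads off $k:=l-j_2=j_1-j_2=i_1-i_2$ and $F_1\subseteq -k+F_2$ directly. In the case $j_2\geq l$, the product collapses to $(i_2,j_2,F_2\cap(l-j_2+G))$, forcing $i_1=i_2$, $j_1=j_2$ (so $k=0$), and $F_1\subseteq F_2$. No substantive obstacle arises; the only care required is in bookkeeping of the branch conditions of the product formula and the signs inside the translations $-k+F_2$.
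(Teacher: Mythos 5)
Your proof is correct. Note that the paper does not prove this proposition at all — it is quoted verbatim from Proposition~6 of the cited earlier paper \cite{Gutik-Mykhalenych=2020} — so there is no in-paper argument to compare against; your direct verification (classifying the non-zero idempotents as the triples $(i,i,F)$, testing $e=(j_1,j_1,F_1)$ for sufficiency, and splitting on the branch condition $j_2\leqslant l$ versus $j_2\geqslant l$ for necessity) is the standard and complete way to establish it from the multiplication formula and the definition $s\preccurlyeq t\Leftrightarrow s=te$.
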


\begin{proposition}\label{proposition-2.2}
For every non-zero elements $(i_1,j_1,F_1)$ and $(i_2,j_2,F_2)$ of the semigroup $\boldsymbol{B}_{\omega}^{\mathscr{F}}$, both sets
\begin{equation*}
\left\{(i,j,F)\in\boldsymbol{B}_{\omega}^{\mathscr{F}}\colon (i_1,j_1,F_1)\cdot(i,j,F)=(i_2,j_2,F_2)\right\}
\end{equation*}
and
\begin{equation*}
\left\{(i,j,F)\in\boldsymbol{B}_{\omega}^{\mathscr{F}}\colon(i,j,F)\cdot(i_1,j_1,F_1)=(i_2,j_2,F_2)\right\}
\end{equation*}
are finite.
\end{proposition}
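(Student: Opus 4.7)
The plan is to combine two finiteness observations. First I would use the fact that the projection $\pi\colon\boldsymbol{B}_{\omega}^{\mathscr{F}}\setminus\{0\}\to\boldsymbol{B}_{\omega}$ sending $(i,j,F)\mapsto (i,j)$ respects products whenever the product in $\boldsymbol{B}_{\omega}^{\mathscr{F}}$ is non-zero: this is immediate from the multiplication rule, since the first two coordinates of the product do not depend on the $\mathscr{F}$-coordinates $F_1,F_2$. Hence any solution $(i,j,F)$ of $(i_1,j_1,F_1)\cdot(i,j,F)=(i_2,j_2,F_2)$ projects to a solution of $(i_1,j_1)\cdot(i,j)=(i_2,j_2)$ in the bicyclic semigroup $\boldsymbol{B}_{\omega}$.

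Next I would verify that this bicyclic equation has only finitely many solutions $(i,j)$. Splitting according to whether $j_1\leqslant i$ or $j_1\geqslant i$: in the first case $i=i_2-i_1+j_1$ and $j=j_2$ are uniquely forced; in the second case one needs $i_1=i_2$, and then $i$ ranges over the finite interval $[\max\{0,j_1-j_2\},j_1]$ with $j=j_2-j_1+i$ determined by $i$. This reproduces the standard argument for the bicyclic monoid and supplies a finite list of candidate first two coordinates.

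Then, with an admissible pair $(i,j)$ fixed, I would count the admissible $F\in\mathscr{F}$. Since $\mathscr{F}$ consists of non-empty inductive subsets of $\omega$, every $F\in\mathscr{F}$ equals $[k)$ for a unique $k\in\omega$. Writing $F_1=[k_1)$, $F_2=[k_2)$, $F=[k)$, and using that $(n+[k_1))\cap\omega=[\max\{0,k_1+n\})$, the third-coordinate equation reduces, in the case $j_1\leqslant i$, to $\max\{k,\,k_1+j_1-i,\,0\}=k_2$, and in the case $j_1\geqslant i$, to $\max\{k_1,\,k+i-j_1,\,0\}=k_2$. In every subcase either $k$ is uniquely determined, or the equation has no solution, or $k$ is forced to lie in $\{0,1,\ldots,k_2\}$; in particular at most $k_2+1$ elements $F\in\mathscr{F}$ can be admissible for a given $(i,j)$.

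Combining the two bounds presents the first set in the proposition as a finite union of finite sets, hence finite. The second set is handled by an entirely symmetric argument using the other branch of the multiplication rule. The main technical obstacle I anticipate is the bookkeeping in the third-coordinate equation: one must keep track of the sign of the shift parameters $k_1+j_1-i$ and $k+i-j_1$, which can become negative and force a clipping of the shifted interval against $\omega$, and pay attention to boundary subcases in which the $\max$ is simultaneously attained by more than one term. Once the parametrization $F=[k)$ is in place, however, the argument collapses to elementary arithmetic over $\omega$.
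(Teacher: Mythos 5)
Your argument is correct, but it takes a genuinely different route from the paper. The paper's proof is order-theoretic: writing $a=(i_1,j_1,F_1)$ and $b=(i_2,j_2,F_2)$, it notes that every solution $x$ of $a\cdot x=b$ also solves $(a^{-1}a)\cdot x=a^{-1}b$, and since $a^{-1}a=(j_1,j_1,F_1)$ is an idempotent, Lemma~1.4.6 of Lawson identifies the latter solution set with the up-set $\left\{x\colon a^{-1}b\preccurlyeq x\right\}$ in the natural partial order; the explicit description of $\preccurlyeq$ (Proposition~\ref{proposition-2.1}) then shows that this up-set is finite, with the dual argument for right translations. Your proof instead computes directly: you project to the bicyclic monoid to confine the first two coordinates to a finite list, and then solve the third-coordinate equation using the parametrization $F=[k)$ of non-empty inductive subsets of $\omega$. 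Both work; the paper's version is shorter, avoids all case bookkeeping, and isolates exactly what is needed (finiteness of up-sets), while yours is self-contained, elementary, and yields explicit bounds. One small arithmetic slip in your third step: in the case $j_1\geqslant i$ with $k_1=k_2$, the equation $\max\{k_1,\,k+i-j_1,\,0\}=k_2$ admits every $k\leqslant k_2+j_1-i$, so $k$ is not forced into $\{0,1,\dots,k_2\}$ as you assert; the set of admissible $k$ is nevertheless finite, so the conclusion is unaffected.
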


\begin{proof}
It is obvious that the set
\begin{equation*}
A=\left\{(i,j,F)\in\boldsymbol{B}_{\omega}^{\mathscr{F}}\colon (i_1,j_1,F_1)\cdot(i,j,F)=(i_2,j_2,F_2)\right\}
\end{equation*}
is a subset of
\begin{equation*}
B=\left\{(i,j,F)\in\boldsymbol{B}_{\omega}^{\mathscr{F}}\colon (i_1,j_1,F_1)^{-1}\cdot(i_1,j_1,F_1)\cdot(i,j,F)=(i_1,j_1,F_1)^{-1}\cdot(i_2,j_2,F_2)\right\}.
\end{equation*}
By Lemmas~2 and~3 from \cite{Gutik-Mykhalenych=2020},  $(i_1,j_1,F_1)^{-1}\cdot(i_1,j_1,F_1)=(j_1,j_1,F_1)$ is an idempotent of $\boldsymbol{B}_{\omega}^{\mathscr{F}}$, and hence Lemma~1.4.6 of \cite{Lawson-1998} implies that
\begin{equation*}
B=\left\{(i,j,F)\in\boldsymbol{B}_{\omega}^{\mathscr{F}}\colon (i_1,j_1,F_1)^{-1}\cdot(i_2,j_2,F_2)\preccurlyeq (i,j,F)\right\}.
\end{equation*}
Proposition~\ref{proposition-2.1} implies that there exist finitely many $i,j\in\omega$ and $F\in\mathscr{F}$ such that $(i_1,j_1,F_1)^{-1}\cdot(i_2,j_2,F_2)\preccurlyeq (i,j,F)$, and hence the set $A$ is finite.
\end{proof}

The following theorem generalizes the results on the topologizabily of the bicyclic monoid obtained in \cite{Eberhart-Selden=1969} and \cite{Bertman-West-1976}.

\begin{theorem}\label{theorem-2.3}
Let $\mathscr{F}$ be a family of non-empty inductive subsets of $\omega$. Then every Hausdorff shift-continuous topology $\tau$ on the semigroup $\boldsymbol{B}_{\omega}^{\mathscr{F}}$ is discrete.
\end{theorem}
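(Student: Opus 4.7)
By Proposition~1 of \cite{Gutik-Mykhalenych=2021} we may assume $[0)\in\mathscr{F}$, and since the non-empty inductive subsets of $\omega$ are exactly the sets $[k)$ with $k\geqslant 0$, we have $\mathscr{F}=\{[k):k\in K\}$ for some non-empty $K\subseteq\omega$ closed downwards. The plan is to prove that every singleton is $\tau$-open, via two reductions followed by a direct analysis of the remaining case.

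\emph{First reduction: to idempotents.} Combining Proposition~\ref{proposition-2.2} with Hausdorffness, if an idempotent $(i,i,F)$ is isolated then so is every $y=(i,j,F)$: the continuous right translation $R_{y^{-1}}$ sends $y$ to $y y^{-1}=(i,i,F)$, and by Proposition~\ref{proposition-2.2} its fibre over $(i,i,F)$ is finite, hence a finite open subset of a Hausdorff space, every point of which is isolated. So it suffices to isolate every idempotent.

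\emph{Second reduction: to $(0,0,G)$.} With $p=(0,1,[0))$ and $q=(1,0,[0))$ one checks directly that $p^n=(0,n,[0))$, $q^n=(n,0,[0))$, and $p^n\cdot(n,n,G)\cdot q^n=(0,0,G)$ for all $n\in\omega$ and $G\in\mathscr{F}$. Hence, if $(0,0,G)$ is isolated, the continuous map $\varphi_n(x):=p^n x q^n$ has an open preimage of $\{(0,0,G)\}$; this preimage is finite by two applications of Proposition~\ref{proposition-2.2} and contains $(n,n,G)$, so $(n,n,G)$ is isolated. It therefore suffices to isolate each $e_k:=(0,0,[k))$ with $k\in K$.

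\emph{Main step: isolating $e_{k_0}$.} For each $k\in K$ the subsemigroup $B_k:=\{(i,j,[k)):i,j\in\omega\}$ is isomorphic to the bicyclic monoid with identity $e_k$; its subspace topology is Hausdorff and shift-continuous, hence discrete by Eberhart--Selden \cite{Eberhart-Selden=1969} and Bertman--West \cite{Bertman-West-1976}. Now suppose $x_\alpha=(i_\alpha,j_\alpha,[k_\alpha))\to e_{k_0}$ in $\tau$ with $x_\alpha\neq e_{k_0}$. If $k_\alpha=k_0$ cofinally, discreteness of $B_{k_0}$ gives $x_\alpha=e_{k_0}$ eventually, a contradiction. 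If $k_\alpha>k_0$ cofinally, then $[k_0+1)\in\mathscr{F}$ by downward closure of $K$, and a short computation using inductivity yields $x_\alpha\cdot(0,0,[k_0+1))=x_\alpha$; continuity of right translation then forces $x_\alpha\to e_{k_0}\cdot(0,0,[k_0+1))=e_{k_0+1}\neq e_{k_0}$, contradicting Hausdorffness. If $k_\alpha<k_0$ cofinally, pigeonhole on the finite set $\{0,\ldots,k_0-1\}$ passes to a subnet with $k_\alpha=k^*$ constant; then iterated application of the left and right shifts by $e_{k_0}$ and by suitable powers of $p$ and $q$ lands successive subnets inside various bicyclic subsemigroups $B_l$, and the discreteness of each $B_l$ combined with further pigeonhole on coordinates bounded by $k_0$ eventually produces a constant subnet equal to some $x\neq e_{k_0}$ which also converges to $e_{k_0}$, a final contradiction. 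The main obstacle lies precisely in this last sub-case, where the $F$-coordinate under shifts transforms via intersections of inductively translated sets and so requires several rounds of case analysis and pigeonhole to complete the argument.
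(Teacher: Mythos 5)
Your two reductions and the first two cases of the main step are correct, but the third case ($k_\alpha<k_0$) is a genuine gap, and it is not a cosmetic one: it is exactly the case your overall strategy cannot avoid, and the mechanism you sketch for it would fail. You propose to conclude by producing "a constant subnet equal to some $x\neq e_{k_0}$ which also converges to $e_{k_0}$". But a net such as $x_n=(n,n,[k^*))$ with $k^*<k_0$ has no constant subnet, so no amount of pigeonhole on the $F$-coordinate or on coordinates "bounded by $k_0$" can terminate the argument that way; the first two coordinates are unbounded and survive every translation you apply. The contradiction in this case has to come from a different source, namely from exhibiting a second, distinct limit: if $i_\alpha\geqslant 1$ cofinally, then $(1,1,[0))\cdot x_\alpha=x_\alpha$ while $(1,1,[0))\cdot e_{k_0}=(1,1,[k_0-1))\neq e_{k_0}$ (here $k_0\geqslant 1$ because $k^*<k_0$), so $x_\alpha$ converges to two distinct points, contradicting Hausdorffness; symmetrically with right translation by $(1,1,[0))$ when $j_\alpha\geqslant 1$ cofinally; and if $i_\alpha=j_\alpha=0$ eventually then $x_\alpha=e_{k^*}$ is eventually constant and Hausdorffness is violated directly. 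As written, your proof of the theorem is incomplete precisely where the new content beyond the bicyclic case lives.

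It is worth noting that the paper avoids this case analysis entirely, and you could too with one more idea. The paper observes that $(0,0,[1))\cdot\boldsymbol{B}_{\omega}^{\mathscr{F}}$ and $\boldsymbol{B}_{\omega}^{\mathscr{F}}\cdot(0,0,[1))$ are retracts (images of translations by an idempotent), hence closed in a Hausdorff space, and that their union is $\boldsymbol{B}_{\omega}^{\mathscr{F}}\setminus\{(0,0,[0))\}$; thus $(0,0,[0))$ is isolated. It then uses simplicity of $\boldsymbol{B}_{\omega}^{\mathscr{F}}$: for any $x$ there are $a,b$ with $a\cdot x\cdot b=(0,0,[0))$, and by Proposition~\ref{proposition-2.2} the continuous map $\chi\mapsto a\cdot\chi\cdot b$ has finite preimage over the isolated point $(0,0,[0))$, so $x$ lies in a finite open set and is isolated. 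In your framework the analogous repair is: note that for $e_0=(0,0,[0))$ your problematic case $k_\alpha<k_0$ is vacuous, so your net argument does establish that $e_0$ is isolated; but your two reductions only propagate isolation within a fixed $F$-coordinate, so you must then add the simplicity-plus-Proposition~\ref{proposition-2.2} step (or the translation-by-$(1,1,[0))$ argument above) to reach the elements $(0,0,[k))$ with $k\geqslant 1$.
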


\begin{proof}
For any element $(i,j,[k))$ of the semigroup $\boldsymbol{B}_{\omega}^{\mathscr{F}}$ we have that
\begin{align*}
  (0,0,[1))\cdot(i,j,[k))&=(i,j,(-i+[1))\cap[k))= \\
  &=
  \left\{
    \begin{array}{ll}
      (0,j,[1)), & \hbox{if~~} i=0 \hbox{~~and~~} k\leqslant 1;\\
      (0,j,[k)), & \hbox{if~~} i=0 \hbox{~~and~~} k\geqslant 2; \\
      (i,j,[k)), & \hbox{if~~} i\geqslant 1
    \end{array}
  \right.
\end{align*}
and
\begin{align*}
  (i,j,[k))\cdot(0,0,[1))&=(i,j,[k)\cap(-j+[1)))= \\
  &=
  \left\{
    \begin{array}{ll}
      (i,0,[1)), & \hbox{if~~} j=0 \hbox{~~and~~} k\leqslant 1;\\
      (i,0,[k)), & \hbox{if~~} j=0 \hbox{~~and~~} k\geqslant 2; \\
      (i,j,[k)), & \hbox{if~~} j\geqslant 1.
    \end{array}
  \right.
\end{align*}
So
\begin{equation}\label{eq-2.1}
  \left((0,0,[1))\cdot\boldsymbol{B}_{\omega}^{\mathscr{F}}\right)\cup\left(\boldsymbol{B}_{\omega}^{\mathscr{F}}\cdot(0,0,[1))\right)= \boldsymbol{B}_{\omega}^{\mathscr{F}}\setminus\{(0,0,[0))\}.
\end{equation}
Since $\tau$ is Hausdorff, every retract of $\big(\boldsymbol{B}_{\omega}^{\mathscr{F}},\tau\big)$ is its closed subset. It is obvious  that $(0,0,[1))\cdot\boldsymbol{B}_{\omega}^{\mathscr{F}}$ and $\boldsymbol{B}_{\omega}^{\mathscr{F}}\cdot(0,0,[1))$ are retracts of the topological space $\big(\boldsymbol{B}_{\omega}^{\mathscr{F}},\tau\big)$, because $(0,0,[1))$ is an idempotent of the semigroup $\boldsymbol{B}_{\omega}^{\mathscr{F}}$. Since any retract of Hausdorff space is closed (see: \cite[Ex.~1.5.c]{Engelking-1989}), equality \eqref{eq-2.1} implies that the point $(0,0,[0))$ is an isolated point of the space $\big(\boldsymbol{B}_{\omega}^{\mathscr{F}},\tau\big)$. By Corollary~2 of \cite{Gutik-Mykhalenych=2020}, $\boldsymbol{B}_{\omega}^{\mathscr{F}}$ is a simple semigroup. This implies that for any
$(i,j,[k))\in\boldsymbol{B}_{\omega}^{\mathscr{F}}$ there exist $(i_1,j_1,[k_1)),(i_2,j_2,[k_2))\in\boldsymbol{B}_{\omega}^{\mathscr{F}}$
such that $(i_1,j_1,[k_1))\cdot(i,j,[k))\cdot(i_2,j_2,[k_2))=(0,0,[0))$, and moreover by Proposition~\ref{proposition-2.2} the equation $(i_1,j_1,[k_1))\cdot\chi\cdot(i_2,j_2,[k_2))=(0,0,[0))$ has finitely many solutions in the semigroup $\boldsymbol{B}_{\omega}^{\mathscr{F}}$.
Since $(0,0,[0))$ is an isolated point of $\big(\boldsymbol{B}_{\omega}^{\mathscr{F}},\tau\big)$, the separate continuity of the semigroup operation in $\big(\boldsymbol{B}_{\omega}^{\mathscr{F}},\tau\big)$ and the above arguments imply that $\big(\boldsymbol{B}_{\omega}^{\mathscr{F}},\tau\big)$ is the discrete space.
\end{proof}

The following proposition generalizes the results obtained for the bicyclic monoid in \cite{Eberhart-Selden=1969} and \cite{Gutik=2015}.

\begin{proposition}\label{proposition-2.4}
Let $\mathscr{F}$ be a family of non-empty inductive subsets of $\omega$ and $\boldsymbol{B}_{\omega}^{\mathscr{F}}$ be a proper dense subsemigroup of a Hausdorff semitopological semigroup $S$. Then $I=S\setminus\boldsymbol{B}_{\omega}^{\mathscr{F}}$ is a closed ideal of $S$.
\end{proposition}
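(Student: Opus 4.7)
The plan is to prove two things: $\boldsymbol{B}_{\omega}^{\mathscr{F}}$ is open in $S$ (so $I$ is closed), and $I$ absorbs $S$ on both sides. Since the subspace topology on $\boldsymbol{B}_{\omega}^{\mathscr{F}}$ inherited from $S$ is Hausdorff and shift-continuous, Theorem~\ref{theorem-2.3} makes it discrete: for every $x\in\boldsymbol{B}_{\omega}^{\mathscr{F}}$ there is an open $U_x\subseteq S$ with $U_x\cap\boldsymbol{B}_{\omega}^{\mathscr{F}}=\{x\}$. The key step is upgrading this to ``$\{x\}$ open in $S$'' itself.

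First I will show $\{e\}$ is open in $S$ for $e=(0,0,[0))$ by replaying the retract argument of Theorem~\ref{theorem-2.3} inside $S$. Since $e_1=(0,0,[1))$ is idempotent, the continuous retractions $\rho_{e_1},\lambda_{e_1}$ have closed images $Se_1$ and $e_1S$ in the Hausdorff space $S$, and by~\eqref{eq-2.1} their union contains $\boldsymbol{B}_{\omega}^{\mathscr{F}}\setminus\{e\}$. The algebraic identity $e\cdot e_1=e_1\ne e$ rules out $e\in Se_1\cup e_1S$ (if $e=se_1$ then $e\cdot e_1=se_1e_1=e$, contradicting the direct computation in $\boldsymbol{B}_{\omega}^{\mathscr{F}}$), so $e$ has an open neighbourhood $U\subseteq S$ with $U\cap\boldsymbol{B}_{\omega}^{\mathscr{F}}=\{e\}$. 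Any point $z\in U\cap I$ could then be separated from $e$ by Hausdorffness, producing a nonempty open set in $S$ disjoint from $\boldsymbol{B}_{\omega}^{\mathscr{F}}$ and contradicting density; hence $U=\{e\}$. For arbitrary $x\in\boldsymbol{B}_{\omega}^{\mathscr{F}}$ I will use simplicity of $\boldsymbol{B}_{\omega}^{\mathscr{F}}$ (Corollary~2 of \cite{Gutik-Mykhalenych=2020}) to find $a,b\in\boldsymbol{B}_{\omega}^{\mathscr{F}}$ with $axb=e$. Proposition~\ref{proposition-2.2} makes the set $A=\{\chi\in\boldsymbol{B}_{\omega}^{\mathscr{F}}\colon a\chi b=e\}$ finite; since $s\mapsto asb$ is continuous and $\{e\}$ is open in $S$, the preimage $\{s\in S\colon asb=e\}$ is open, and intersecting it with an open neighbourhood of $x$ disjoint from the closed finite set $A\setminus\{x\}$ yields an open $V\ni x$ with $V\cap\boldsymbol{B}_{\omega}^{\mathscr{F}}=\{x\}$. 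The same density-plus-Hausdorff trick forces $V=\{x\}$. Hence $\boldsymbol{B}_{\omega}^{\mathscr{F}}$ is open in $S$ and $I$ is closed.

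For the ideal property I first handle products with elements of $\boldsymbol{B}_{\omega}^{\mathscr{F}}$: for $x\in\boldsymbol{B}_{\omega}^{\mathscr{F}}$ and $y\in I$, supposing $xy=z\in\boldsymbol{B}_{\omega}^{\mathscr{F}}$ leads to contradiction, since Proposition~\ref{proposition-2.2} makes $\{w\in\boldsymbol{B}_{\omega}^{\mathscr{F}}\colon xw=z\}$ finite and disjoint from $\{y\}$, so intersecting $\lambda_x^{-1}(\{z\})$ (open since $\{z\}$ is now open in $S$) with a neighbourhood of $y$ missing these finitely many points produces a nonempty open subset of $S$ disjoint from $\boldsymbol{B}_{\omega}^{\mathscr{F}}$. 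Thus $\boldsymbol{B}_{\omega}^{\mathscr{F}}\cdot y\cup y\cdot\boldsymbol{B}_{\omega}^{\mathscr{F}}\subseteq I$. The extension to arbitrary $s\in S$ now uses continuity of $\rho_y$ and closedness of $I$:
\begin{equation*}
\rho_y(S)=\rho_y\bigl(\overline{\boldsymbol{B}_{\omega}^{\mathscr{F}}}\bigr)\subseteq\overline{\rho_y(\boldsymbol{B}_{\omega}^{\mathscr{F}})}\subseteq\overline{I}=I,
\end{equation*}
and symmetrically $\lambda_y(S)\subseteq I$.

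The principal obstacle is the bootstrap from $\{e\}$ being open in the subspace topology on $\boldsymbol{B}_{\omega}^{\mathscr{F}}$ to $\{e\}$ being open in $S$: this requires combining the closed-retract observation with the explicit algebraic witness $e\cdot e_1=e_1\ne e$, together with a Hausdorff separation argument using density. All subsequent steps are routine transport by the simple-semigroup structure, Proposition~\ref{proposition-2.2}, and density.
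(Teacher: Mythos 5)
Your proposal is correct, and its skeleton coincides with the paper's: discreteness of $\boldsymbol{B}_{\omega}^{\mathscr{F}}$ (Theorem~\ref{theorem-2.3}) plus density makes $\boldsymbol{B}_{\omega}^{\mathscr{F}}$ open in $S$, and Proposition~\ref{proposition-2.2} together with the fact that every neighbourhood of a point of $I$ meets $\boldsymbol{B}_{\omega}^{\mathscr{F}}$ in infinitely many points rules out $xy\in\boldsymbol{B}_{\omega}^{\mathscr{F}}$ for $x\in\boldsymbol{B}_{\omega}^{\mathscr{F}}$, $y\in I$. Two remarks on where you diverge. First, your lengthy re-derivation of ``$\{x\}$ is open in $S$'' via the retracts $Se_1$, $e_1S$ and simplicity is redundant: having already obtained from Theorem~\ref{theorem-2.3} an open $U_x\subseteq S$ with $U_x\cap\boldsymbol{B}_{\omega}^{\mathscr{F}}=\{x\}$, the density-plus-$T_1$ observation you invoke at the very end ($U_x\setminus\{x\}$ is open and misses the dense set, hence is empty) already gives $U_x=\{x\}$; this one-line upgrade is exactly what the paper uses, so the entire middle of your first part can be deleted. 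Second, your treatment of products involving two elements of $I$ is genuinely different from, and cleaner than, the paper's: the paper argues by a separate contradiction for $x,y\in I$ with $xy\notin I$, whereas you deduce $\rho_y(S)=\rho_y\bigl(\overline{\boldsymbol{B}_{\omega}^{\mathscr{F}}}\bigr)\subseteq\overline{\rho_y(\boldsymbol{B}_{\omega}^{\mathscr{F}})}\subseteq\overline{I}=I$ (and symmetrically for $\lambda_y$) from continuity of translations and closedness of $I$. This yields the full ideal property $SI\cup IS\subseteq I$ in one stroke once $\boldsymbol{B}_{\omega}^{\mathscr{F}}\cdot I\cup I\cdot\boldsymbol{B}_{\omega}^{\mathscr{F}}\subseteq I$ is known, and it avoids having to apply Proposition~\ref{proposition-2.2} to an equation $u\cdot y=z$ in which one factor lies outside $\boldsymbol{B}_{\omega}^{\mathscr{F}}$ --- a point at which the paper's own wording is looser than yours. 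So: correct, same core mechanism, with an avoidable detour in the first half and a tidier finish in the second.
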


\begin{proof}
By Theorem~\ref{theorem-2.3}, $\boldsymbol{B}_{\omega}^{\mathscr{F}}$ is a dense discrete subspace of $S$, and hence  $\boldsymbol{B}_{\omega}^{\mathscr{F}}$ is an open subspace of $S$.

Fix an arbitrary element $y\in I$. If $xy = z\notin I$ for some $x\in \boldsymbol{B}_{\omega}^{\mathscr{F}}$ then there exists an open neighbourhood $U(y)$ of the point $y$ in the space $S$ such that $\{x\}\cdot U(y) =\{z\}\subset\boldsymbol{B}_{\omega}^{\mathscr{F}}$. The neighbourhood $U(y)$ contains infinitely many elements of the semigroup $\boldsymbol{B}_{\omega}^{\mathscr{F}}$, which contradicts Proposition~\ref{proposition-2.2}. The obtained contradiction implies that  $xy = z\in I$ for all $x\in \boldsymbol{B}_{\omega}^{\mathscr{F}}$ and $y\in I$. The proof of the statement that $yx\in I$ for all $x\in \boldsymbol{B}_{\omega}^{\mathscr{F}}$ and $y\in I$ is similar.

Suppose to the contrary that $xy = z\notin I$  for some $x,y\in I$. Then $z\in \boldsymbol{B}_{\omega}^{\mathscr{F}}$ and the separate continuity of the semigroup operation in $S$ implies that there exist open neighbourhoods $U(x)$ and $U(y)$ of the points $x$ and y in $S$, respectively,
such that $\{x\}\cdot U(y)=\{z\}$ and $U(x)\cdot \{y\}=\{z\}$. Since both neighbourhoods $U(x)$ and $U(y)$ contain infinitely many elements of the semigroup $\boldsymbol{B}_{\omega}^{\mathscr{F}}$, any of equalities $\{x\}\cdot U(y)=\{z\}$ and $U(x)\cdot \{y\}=\{z\}$ contradicts mentioned above
Proposition~\ref{proposition-2.2}. The obtained contradiction implies that $xy\in I$.
\end{proof}

\section{On a semitopological locally compact monoid $\boldsymbol{B}_{\omega}^{\mathscr{F}}$ with an adjoined zero}\label{section-3}

In this section we assume that $S=\boldsymbol{B}_{\omega}^{\mathscr{F}}\sqcup\{\boldsymbol{0}\}$, i.e., $S$ the semigroup $\boldsymbol{B}_{\omega}^{\mathscr{F}}$ with an adjoined zero $\boldsymbol{0}$. We observe that the semigroup $S$ is isomorphic to the semigroup $\boldsymbol{B}_{\omega}^{\mathscr{F}^0}$, where the family $\mathscr{F}^0$ consists of elements of $\mathscr{F}$ and the empty set $\varnothing$ (see \cite[Lemma~1]{Gutik-Mykhalenych=2020}).
 Later in the following series of lemmas we assume that $S$ is a Hausdorff locally compact semitopological semigroup with the nonisolated zero $\boldsymbol{0}$ and  the family $\mathscr{F}$ is $\omega$-closed and consists of nonempty inductive subsets of $\omega$.

By Theorem~\ref{theorem-2.3}, $\boldsymbol{B}_{\omega}^{\mathscr{F}}$ is a discrete subspace of $S$. This implies the following lemma.

\begin{lemma}\label{lemma-3.1}
Let $U(\boldsymbol{0})$ and $V(\boldsymbol{0})$ be any compact-and-open neighbourhoods of $\boldsymbol{0}$ in $S$. Then the set $U(\boldsymbol{0})\setminus V(\boldsymbol{0})$ is finite.
\end{lemma}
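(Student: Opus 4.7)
The plan is to show that $U(\boldsymbol{0})\setminus V(\boldsymbol{0})$ is simultaneously compact and contained in a discrete subspace of $S$, hence finite.

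First I would observe that, since both $U(\boldsymbol{0})$ and $V(\boldsymbol{0})$ are neighbourhoods of $\boldsymbol{0}$, we have $\boldsymbol{0}\in V(\boldsymbol{0})$, and therefore
$$U(\boldsymbol{0})\setminus V(\boldsymbol{0})\subseteq S\setminus\{\boldsymbol{0}\}=\boldsymbol{B}_{\omega}^{\mathscr{F}}.$$
Next, because $V(\boldsymbol{0})$ is open in $S$, its complement $S\setminus V(\boldsymbol{0})$ is closed, so
$$U(\boldsymbol{0})\setminus V(\boldsymbol{0})=U(\boldsymbol{0})\cap\bigl(S\setminus V(\boldsymbol{0})\bigr)$$
is a closed subset of the compact set $U(\boldsymbol{0})$, and hence is itself compact.

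Finally, by the remark recorded immediately before the lemma --- an application of Theorem~\ref{theorem-2.3} to the (Hausdorff, shift-continuous) subspace topology that $\boldsymbol{B}_{\omega}^{\mathscr{F}}$ inherits from $S$ --- the subspace $\boldsymbol{B}_{\omega}^{\mathscr{F}}$ carries the discrete topology. Any compact subset of a Hausdorff discrete space is finite, so $U(\boldsymbol{0})\setminus V(\boldsymbol{0})$ is finite. I do not anticipate a genuine obstacle here: the two ingredients --- compactness of $U(\boldsymbol{0})$ and discreteness of $\boldsymbol{B}_{\omega}^{\mathscr{F}}$ inside $S$ --- combine in a completely standard way, and all the real content has already been absorbed into Theorem~\ref{theorem-2.3}.
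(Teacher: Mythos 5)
Your proposal is correct and is exactly the argument the paper intends: the authors give no written proof, stating only that the lemma follows from the discreteness of $\boldsymbol{B}_{\omega}^{\mathscr{F}}$ in $S$ (Theorem~\ref{theorem-2.3}), and your filling-in --- $U(\boldsymbol{0})\setminus V(\boldsymbol{0})$ is a closed subset of the compact set $U(\boldsymbol{0})$, hence compact, and a compact subset of a discrete subspace is finite --- is the standard completion of that remark.
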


Since $\boldsymbol{B}_{\omega}^{\mathscr{F}}$ is a discrete subspace of $S$ without loss of generality we consider only compact-and-open neighbourhoods of zero in $S$.

In the general case if the family $\mathscr{F}$ contains an inductive set $F$ then
\begin{equation*}
  \boldsymbol{B}_{\omega}^{\{F\}}=\{(i,j,F)\colon i,j\in\omega\}
\end{equation*}
is an inverse subsemigroup of $\boldsymbol{B}_{\omega}^{\mathscr{F}}$. Moreover, if $F$ is non-empty then by Proposition~3 of \cite{Gutik-Mykhalenych=2020}, $\boldsymbol{B}_{\omega}^{\{F\}}$ is isomorphic to the bicyclic semigroup.

\begin{lemma}\label{lemma-3.2}
For any neighbourhood $U(\boldsymbol{0})$  of $\boldsymbol{0}$ in $S$ there exists $F\in \mathscr{F}$ such that the set $U(\boldsymbol{0})\cap \boldsymbol{B}_{\omega}^{\{F\}}$ is infinite.
\end{lemma}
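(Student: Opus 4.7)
The plan is to argue by contradiction: assume that $U(\boldsymbol{0})\cap\boldsymbol{B}_{\omega}^{\{F\}}$ is finite for every $F\in\mathscr{F}$, aiming to contradict the non-isolation of $\boldsymbol{0}$, which together with the discreteness of $\boldsymbol{B}_{\omega}^{\mathscr{F}}$ from Theorem~\ref{theorem-2.3} forces $U(\boldsymbol{0})\cap\boldsymbol{B}_{\omega}^{\mathscr{F}}$ to be infinite. By the remark preceding the statement I may assume that $U(\boldsymbol{0})$ is compact-and-open.

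The main device will be the element $a=(1,0,[0))$, which lies in $\boldsymbol{B}_{\omega}^{\mathscr{F}}$ under the paper's standing assumption that $[0)\in\mathscr{F}$. A direct calculation from the multiplication rule of $\boldsymbol{B}_{\omega}^{\mathscr{F}}$ shows that left translation by $a$ acts as the pure shift $\lambda_a\bigl((i,j,[l))\bigr)=(i+1,j,[l))$ on $\boldsymbol{B}_{\omega}^{\mathscr{F}}$. Since $\lambda_a$ is continuous and fixes $\boldsymbol{0}$, the preimage $\lambda_a^{-1}(U(\boldsymbol{0}))$ is an open neighbourhood of $\boldsymbol{0}$, so it contains some compact-and-open neighbourhood $V$ of $\boldsymbol{0}$. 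Lemma~\ref{lemma-3.1} then forces the set $E:=U(\boldsymbol{0})\setminus V$ to be \emph{finite}, and by construction $\lambda_a(x)\in U(\boldsymbol{0})$ for every $x\in U(\boldsymbol{0})\setminus E$.

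The combinatorial core of the argument is as follows. For each pair $(j,l)$ with $[l)\in\mathscr{F}$, set $S(j,l):=\{i\in\omega\colon(i,j,[l))\in U(\boldsymbol{0})\}$. If $S(j,l)$ were infinite, then, because only finitely many of its indices can correspond to elements of $E$, one could pick $i_{0}\in S(j,l)$ beyond all of them; the non-membership of $(i_{0},j,[l))$ in $E$ would force $(i_{0}+1,j,[l))\in U(\boldsymbol{0})$, and iterating one step at a time would give $[i_{0},\infty)\subseteq S(j,l)$, contradicting the slice-finiteness hypothesis for $[l)$. Hence every non-empty $S(j,l)$ is finite, and its maximum $i^{\ast}$ satisfies $(i^{\ast}+1,j,[l))\notin U(\boldsymbol{0})$, whence $(i^{\ast},j,[l))\in E$. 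Consequently at most $|E|<\infty$ many columns $(j,l)$ are non-empty, and $U(\boldsymbol{0})\cap\boldsymbol{B}_{\omega}^{\mathscr{F}}=\bigsqcup_{(j,l)}\{(i,j,[l))\colon i\in S(j,l)\}$ is a finite union of finite sets---finite, contradicting the non-isolation of $\boldsymbol{0}$.

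The genuinely delicate step I expect to be the boundedness of each column $S(j,l)$, that is, the ``escape-into-$E$'' argument: this is where Lemma~\ref{lemma-3.1} is coupled with the one-step shift action of $\lambda_a$, and once that input is cleanly in hand the counting step that produces the contradiction becomes routine. The only supplementary care needed is to verify that the chosen auxiliary element $a=(1,0,[0))$ really induces the pure first-coordinate shift (a short check from the case $j_1\le i_2$ of the multiplication formula), and that the reduction to compact-and-open $U(\boldsymbol{0})$ preserves the hypothesis, both of which are essentially bookkeeping.
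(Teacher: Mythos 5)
Your proof is correct and uses essentially the same mechanism as the paper's: a proof by contradiction in which a one-step shift element of $\boldsymbol{B}_{\omega}^{\{[0)\}}$ maps a smaller neighbourhood $V(\boldsymbol{0})$ into $U(\boldsymbol{0})$, and Lemma~\ref{lemma-3.1} (finiteness of $U(\boldsymbol{0})\setminus V(\boldsymbol{0})$) supplies the contradiction. The only cosmetic differences are that the paper shifts the second coordinate via right translation by $(0,1,[0))$ and contradicts Lemma~\ref{lemma-3.1} directly by exhibiting infinitely many elements of $U(\boldsymbol{0})\setminus V(\boldsymbol{0})$ (one per non-empty slice), whereas you shift the first coordinate via left translation by $(1,0,[0))$ and instead conclude that $U(\boldsymbol{0})\cap\boldsymbol{B}_{\omega}^{\mathscr{F}}$ would be finite, contradicting the non-isolation of $\boldsymbol{0}$.
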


\begin{proof}
The statement of the lemma is obvious when the family $\mathscr{F}$ is finite. Hence we assume that $\mathscr{F}$ is infinite.

Suppose to the contrary that the set $U(\boldsymbol{0})\cap \boldsymbol{B}_{\omega}^{\{F\}}$ is finite for any $F\in\mathscr{F}$. By the separate continuity of the semigroup operation in $S$ there exists a neighbourhood $V(\boldsymbol{0})\subseteq U(\boldsymbol{0})$ of $\boldsymbol{0}$ in $S$ such that $V(\boldsymbol{0})\cdot(0,1,[0))\subseteq U(\boldsymbol{0})$. Since $(i,j,F)\cdot(0,1,[0))=(i,j+1,F)$ for all $i,j\in\omega$ and any $F\in\mathscr{F}$, we obtain that $U(\boldsymbol{0})\setminus V(\boldsymbol{0})$ is an infinite set, which contradicts Lemma~\ref{lemma-3.1}. The obtained contradiction implies the statement of the lemma.
\end{proof}

\begin{lemma}\label{lemma-3.3}
For any neighbourhood $U(\boldsymbol{0})$  of $\boldsymbol{0}$ in $S$ there exists $F\in \mathscr{F}$ such that the set $\boldsymbol{B}_{\omega}^{\{F\}}\setminus U(\boldsymbol{0})$ is finite.
\end{lemma}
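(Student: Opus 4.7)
The goal is to reduce the statement to the bicyclic-with-zero dichotomy of \cite{Gutik=2015}. First, I invoke Lemma~\ref{lemma-3.2} to fix $F\in\mathscr{F}$ such that $U(\boldsymbol{0})\cap\boldsymbol{B}_{\omega}^{\{F\}}$ is infinite. I then observe that \emph{every} compact-and-open neighbourhood $V(\boldsymbol{0})$ of $\boldsymbol{0}$ in $S$ satisfies the same property: by Lemma~\ref{lemma-3.1} the set $U(\boldsymbol{0})\setminus V(\boldsymbol{0})$ is finite, so $V(\boldsymbol{0})\cap\boldsymbol{B}_{\omega}^{\{F\}}$ differs from $U(\boldsymbol{0})\cap\boldsymbol{B}_{\omega}^{\{F\}}$ by at most finitely many points and therefore remains infinite.

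Next I consider $T=\boldsymbol{B}_{\omega}^{\{F\}}\cup\{\boldsymbol{0}\}$ with the subspace topology inherited from $S$. Since $\boldsymbol{B}_{\omega}^{\mathscr{F}}$ is discrete in $S$ by Theorem~\ref{theorem-2.3}, the complement $S\setminus T=\boldsymbol{B}_{\omega}^{\mathscr{F}}\setminus\boldsymbol{B}_{\omega}^{\{F\}}$ is a union of isolated points and therefore open, so $T$ is closed in $S$ and inherits the Hausdorff locally compact property. Algebraically $T$ is a subsemigroup of $S$ (the set $\boldsymbol{B}_{\omega}^{\{F\}}$ is closed under multiplication in $\boldsymbol{B}_{\omega}^{\mathscr{F}}$, and $\boldsymbol{0}$ is absorbing), and as an abstract semigroup $T$ is isomorphic to the bicyclic monoid with an adjoined zero by the remark preceding Lemma~\ref{lemma-3.2}. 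The preliminary observation of the first paragraph guarantees that no neighbourhood of $\boldsymbol{0}$ in $S$ misses $\boldsymbol{B}_{\omega}^{\{F\}}$, so $\boldsymbol{0}$ is \emph{not} isolated in $T$, whereas every other point of $T$, being isolated already in $S$, is isolated in $T$.

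I then invoke the dichotomy of \cite{Gutik=2015}: every Hausdorff locally compact shift-continuous topology on the bicyclic monoid with an adjoined zero is either compact or discrete. Since $T$ is not discrete, it must be compact. Consequently $\boldsymbol{B}_{\omega}^{\{F\}}\setminus U(\boldsymbol{0})=T\setminus\bigl(U(\boldsymbol{0})\cap T\bigr)$ is closed in the compact space $T$, hence compact; being simultaneously a subspace of the discrete space $\boldsymbol{B}_{\omega}^{\{F\}}$, it must be finite. The only step that requires a touch of care is verifying that $\boldsymbol{0}$ is not isolated in $T$, but this falls out immediately from Lemmas~\ref{lemma-3.1} and \ref{lemma-3.2}, so no genuine obstacle arises; the substance of the argument is the extraction of a closed bicyclic-plus-zero subsemigroup and the application of the known dichotomy to it.
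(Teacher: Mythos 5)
Your proposal is correct and follows essentially the same route as the paper: fix $F$ via Lemma~\ref{lemma-3.2}, note that $\boldsymbol{B}_{\omega}^{\{F\}}\cup\{\boldsymbol{0}\}$ is a closed (hence locally compact) subsemigroup isomorphic to the bicyclic monoid with adjoined zero, and apply the dichotomy of \cite{Gutik=2015} to conclude it is compact, whence $\boldsymbol{B}_{\omega}^{\{F\}}\setminus U(\boldsymbol{0})$ is finite. Your extra care in checking that $\boldsymbol{0}$ is non-isolated in the subspace (via Lemma~\ref{lemma-3.1}) only makes explicit a step the paper leaves implicit.
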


\begin{proof}
By Lemma~\ref{lemma-3.2} there exists $F\in \mathscr{F}$ such that the set $U(\boldsymbol{0})\cap \boldsymbol{B}_{\omega}^{\{F\}}$ is infinite. By Theorem~\ref{theorem-2.3} all non-zero elements of the semigroup $S$ are isolated points in $S$, and hence $\boldsymbol{B}_{\omega}^{\{F\}}\cup\{\boldsymbol{0}\}$ is a closed subset of $S$, which by Corollary~3.3.10 of \cite{Engelking-1989} is locally compact. It obvious that $\boldsymbol{B}_{\omega}^{\{F\}}\cup\{\boldsymbol{0}\}$ is a subsemigroup of $S$, which by Proposition~3 of \cite{Gutik-Mykhalenych=2020} is algebraically isomorphic to the bicyclic monoid with an adjoined zero. By Theorem~1 of \cite{Gutik=2015}, $\boldsymbol{B}_{\omega}^{\{F\}}\cup\{\boldsymbol{0}\}$ is compact, which implies the statement of the lemma.
\end{proof}

\begin{lemma}\label{lemma-3.4}
For any neighbourhood $U(\boldsymbol{0})$  of $\boldsymbol{0}$ in $S$ and any $F\in \mathscr{F}$ the set $\boldsymbol{B}_{\omega}^{\{F\}}\setminus U(\boldsymbol{0})$ is finite.
\end{lemma}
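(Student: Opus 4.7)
The plan is to deduce the lemma from the assertion that $\boldsymbol{0}$ is a non-isolated point of $\boldsymbol{B}_{\omega}^{\{F\}}\cup\{\boldsymbol{0}\}$, following the same route as in the proof of Lemma~\ref{lemma-3.3}. Once this is established, $\boldsymbol{B}_{\omega}^{\{F\}}\cup\{\boldsymbol{0}\}$---closed in $S$ by Theorem~\ref{theorem-2.3}, locally compact by Corollary~3.3.10 of \cite{Engelking-1989}, and algebraically isomorphic to the bicyclic monoid with an adjoined zero by Proposition~3 of \cite{Gutik-Mykhalenych=2020}---will be compact by Theorem~1 of \cite{Gutik=2015}, so that $\boldsymbol{B}_{\omega}^{\{F\}}\setminus U(\boldsymbol{0})$, being a closed discrete subspace of a compact space, will be finite. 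Thus it suffices to show that every compact-and-open neighbourhood $U(\boldsymbol{0})$ meets $\boldsymbol{B}_{\omega}^{\{F\}}$ in an infinite set.

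Fix $U(\boldsymbol{0})$ and write $F=[k)$. By Lemma~\ref{lemma-3.3} I obtain $F^{*}=[k^{*})\in\mathscr{F}$ with $\boldsymbol{B}_{\omega}^{\{F^{*}\}}\setminus U(\boldsymbol{0})$ finite; the case $k=k^{*}$ is immediate. In the case $k<k^{*}$ I will transfer infinitude from $\boldsymbol{B}_{\omega}^{\{F^{*}\}}$ to $\boldsymbol{B}_{\omega}^{\{F\}}$ via left multiplication by $a=(0,M,[k))\in \boldsymbol{B}_{\omega}^{\{F\}}$ for a fixed $M\geqslant k^{*}-k$: a direct application of the multiplication formula will show that on the infinite ``good region'' $R_{M}=\{(i,j,[k^{*}))\colon i\leqslant M-(k^{*}-k),\ j\in\omega\}$ the map $x\mapsto a\cdot x$ takes the form $(i,j,[k^{*}))\mapsto(0,M-i+j,[k))\in \boldsymbol{B}_{\omega}^{\{F\}}$ and is finite-to-one. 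Next, separate continuity makes $L_{a}^{-1}(U(\boldsymbol{0}))$ an open neighbourhood of $\boldsymbol{0}$, so I can choose a compact-and-open $V(\boldsymbol{0})\subseteq L_{a}^{-1}(U(\boldsymbol{0}))\cap U(\boldsymbol{0})$. By Lemma~\ref{lemma-3.1} the set $U(\boldsymbol{0})\setminus V(\boldsymbol{0})$ is then finite, whence $\boldsymbol{B}_{\omega}^{\{F^{*}\}}\setminus V(\boldsymbol{0})$ is finite; consequently $R_{M}\cap V(\boldsymbol{0})$ is still infinite, and its image $a\cdot(R_{M}\cap V(\boldsymbol{0}))$---an infinite subset of $\boldsymbol{B}_{\omega}^{\{F\}}$ contained in $a\cdot V(\boldsymbol{0})\subseteq U(\boldsymbol{0})$---yields what we need.

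The symmetric case $k>k^{*}$ will be handled identically using right multiplication by $a=(N,0,[k))$ on the ``good region'' $\{(i,j,[k^{*}))\colon j\leqslant N,\ i\in\omega\}$, where the computation yields $(i,j,[k^{*}))\cdot a=(i+N-j,0,[k))$. The most delicate point I expect to encounter is the availability of a compact-and-open neighbourhood $V(\boldsymbol{0})$ inside the prescribed open set, which Lemma~\ref{lemma-3.1} requires; this follows from local compactness together with the isolation of every non-zero point of $S$ (Theorem~\ref{theorem-2.3}), since for any compact neighbourhood $K$ of $\boldsymbol{0}$ the boundary $\partial K$ consists of finitely many isolated points, and so $\operatorname{int}(K)=K\setminus\partial K$ is compact-and-open.
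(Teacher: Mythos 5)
Your proposal is correct, and it rests on the same three pillars as the paper's proof: the starting set $F^{*}=[k^{*})$ supplied by Lemma~\ref{lemma-3.3}, a translation argument (multiplication by a fixed element combined with separate continuity and Lemma~\ref{lemma-3.1}) to transfer ``$U(\boldsymbol{0})$ meets $\boldsymbol{B}_{\omega}^{\{F\}}$ infinitely'' from one member of $\mathscr{F}$ to another, and the dichotomy of Theorem~1 of \cite{Gutik=2015} for the bicyclic monoid with adjoined zero to upgrade this to cofiniteness. The difference is organizational: the paper proceeds by a two-directional induction on $k$, passing from $[k)$ to $[k+1)$ via left multiplication by $(1,1,[k+1))$ and from $[k)$ to $[k-1)$ via left multiplication by $(1,1,[0))$, invoking the bicyclic dichotomy at each step; you instead jump from $[k^{*})$ to an arbitrary $[k)$ in a single multiplication by $(0,M,[k))$ (resp.\ $(N,0,[k))$), which forces you to restrict to a ``good region'' where the product lands in $\boldsymbol{B}_{\omega}^{\{[k)\}}$ and to verify that the map is finite-to-one there --- details the paper's one-step multipliers make trivial, since they act injectively. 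Your computations in both cases check out (the condition $i\leqslant M-(k^{*}-k)$ is exactly what makes $[k)\cap[k^{*}+i-M)=[k)$, and the good region is infinite and meets $V(\boldsymbol{0})$ in an infinite set because $\boldsymbol{B}_{\omega}^{\{F^{*}\}}\setminus V(\boldsymbol{0})$ is finite). Your closing remark on extracting a compact-and-open $V(\boldsymbol{0})$ inside a prescribed open set --- the boundary of a compact neighbourhood is a compact set of isolated points, hence finite --- is a point the paper uses implicitly, and it is good that you made it explicit. The net effect is the same lemma by a slightly more self-contained route that avoids induction at the cost of a more careful multiplication analysis.
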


\begin{proof}
If the family $\mathscr{F}$ is a singleton then the statement of the lemma follows from Theorem~1 of \cite{Gutik=2015}. Hence we assume that $\mathscr{F}$ is not a singleton.

We shall prove the statement of the lemma by induction.

By Lemma~\ref{lemma-3.3} there exists $F_0\in \mathscr{F}$ such that the set $\boldsymbol{B}_{\omega}^{\{F_0\}}\setminus U(\boldsymbol{0})$ is finite. Since $F_0$ is inductive there exists $k_0\in \omega$ such that $F=[k_0)$. This proves that the base of induction holds.

Next we shall show the inductive step. We consider two cases:
\begin{enumerate}
  \item[(1)] if $[k),[k+1)\in\mathscr{F}$ then  the statement that the set $\boldsymbol{B}_{\omega}^{\{[k)\}}\setminus U(\boldsymbol{0})$ is finite implies that the set $\boldsymbol{B}_{\omega}^{\{[k+1)\}}\setminus U(\boldsymbol{0})$ is finite, too;
  \item[(2)] if $[k),[k-1)\in\mathscr{F}$ then  the statement that the set $\boldsymbol{B}_{\omega}^{\{[k)\}}\setminus U(\boldsymbol{0})$ is finite implies that the set $\boldsymbol{B}_{\omega}^{\{[k-1)\}}\setminus U(\boldsymbol{0})$ is finite, too.
\end{enumerate}

(1) The neighbourhood $U(\boldsymbol{0})$ contains almost all elements of the form $(i,j,[k))$, $i,j\in\omega$. The separate continuity of the semigroup operation in $S$ implies that there exists a neighbourhood $V(\boldsymbol{0})\subseteq U(\boldsymbol{0})$ of $\boldsymbol{0}$ in $S$ such that $(1,1,[k+1))\cdot V(\boldsymbol{0})\subseteq U(\boldsymbol{0})$ and the set $U(\boldsymbol{0})\setminus V(\boldsymbol{0})$ is finite. This implies that $V(\boldsymbol{0})$ contains almost all elements of the semigroup $\boldsymbol{B}_{\omega}^{\{[k)\}}$.
Then the equalities
\begin{equation*}
  (1,1,[k+1))\cdot(0,p,[k))=(1,1+p,[k+1)\cap(-1+[k)))=(1,1+p,[k+1)), \qquad p\in\omega,
\end{equation*}
implies that the neighbourhood $U(\boldsymbol{0})$ contains infinitely many elements of the semigroup $\boldsymbol{B}_{\omega}^{\{[k+1)\}}$. By Corollary~3.3.10 of \cite{Engelking-1989} and Proposition~3 of \cite{Gutik-Mykhalenych=2020}, $\boldsymbol{B}_{\omega}^{\{[k)\}}\cup\{\boldsymbol{0}\}$ is a locally compact semitopological semigroup which is algebraically isomorphic to the bicyclic monoid with an adjoined zero. By Theorem~1 of \cite{Gutik=2015} the set $\boldsymbol{B}_{\omega}^{\{[k+1)\}}\setminus U(\boldsymbol{0})$ is finite.

(2) Since the neighbourhood $U(\boldsymbol{0})$ contains almost all elements of the form $(i,j,[k))$, $i,j\in\omega$, the separate continuity of the semigroup operation in $S$ implies that there exists a neighbourhood $V(\boldsymbol{0})\subseteq U(\boldsymbol{0})$ of $\boldsymbol{0}$ in $S$ such that $(1,1,[0))\cdot V(\boldsymbol{0})\subseteq U(\boldsymbol{0})$ and the set $U(\boldsymbol{0})\setminus V(\boldsymbol{0})$ is finite. Since $V(\boldsymbol{0})$ contains almost all elements of the semigroup $\boldsymbol{B}_{\omega}^{\{[k)\}}$ and the set $U(\boldsymbol{0})\setminus V(\boldsymbol{0})$ is finite, the equalities
\begin{equation*}
  (1,1,[0))\cdot (0,p,[k))=(1,1+p,[0)\cap(-1+[k))=(1,1+p,[k-1)), \qquad p\in\omega,
\end{equation*}
imply that the neighbourhood $U(\boldsymbol{0})$ contains infinitely many elements of the semigroup $\boldsymbol{B}_{\omega}^{\{[k-1)\}}$. By Corollary~3.3.10 of \cite{Engelking-1989} and Proposition~3 of \cite{Gutik-Mykhalenych=2020}, $\boldsymbol{B}_{\omega}^{\{[k-1)\}}\cup\{\boldsymbol{0}\}$ is a locally compact semitopological semigroup which is algebraically isomorphic to the bicyclic monoid with an adjoined zero. By Theorem~1 of \cite{Gutik=2015} the set $\boldsymbol{B}_{\omega}^{\{[k_0-1)\}}\setminus U(\boldsymbol{0})$ is finite.
\end{proof}

\begin{lemma}\label{lemma-3.5}
For any neighbourhood $U(\boldsymbol{0})$  of $\boldsymbol{0}$ in $S$ the set $S\setminus U(\boldsymbol{0})$ is finite.
\end{lemma}

\begin{proof}
In the case when the family $\mathscr{F}$ is finite the statement of the lemma follows from Lemma~\ref{lemma-3.4}, and hence later we assume that $\mathscr{F}$ is infinite.

Suppose to the contrary that there exists a neighbourhood $U(\boldsymbol{0})$  of $\boldsymbol{0}$ in $S$ such that the set $S\setminus U(\boldsymbol{0})$ is infinite. By Lemma~\ref{lemma-3.4} there exists a sequence
\begin{equation*}
  \left\{(m_i,n_i,[k_i))\right\}_{i\in\omega}\subseteq S\setminus \{\boldsymbol{0}\}
\end{equation*}
such that $k_i=k_j$ if and only if $i=j$ and $(m_i,n_i,[k_i))\notin U(\boldsymbol{0})$ and $(m_i+1,n_i+1,[k_i))\in U(\boldsymbol{0})$ for all $i\in\omega$. By the separate continuity of the semigroup operation in $S$ there exists a neighbourhood $V(\boldsymbol{0})\subseteq U(\boldsymbol{0})$ of $\boldsymbol{0}$ in $S$ such that
\begin{equation*}
  (0,1,[0))\cdot V(\boldsymbol{0})\cdot(1,0,[0))\subseteq U(\boldsymbol{0}).
\end{equation*}
Then we have that
\begin{align*}
  (0,1,[0))\cdot (m_i+1,n_i+1,[k_i))\cdot(1,0,[0))&=(m_i,n_i+1,(-1+[0))\cap[k_i))\cdot(1,0,[0))= \\
  &=(m_i,n_i+1,[k_i))\cdot(1,0,[0))=\\
  &=(m_i,n_i,[k_i)\cap(-1+[0)))=\\
  &=(m_i,n_i,[k_i)),
\end{align*}
which contradicts that $U(\boldsymbol{0})\setminus V(\boldsymbol{0})$ is an infinite set, a contradiction. The obtained contradiction implies the statement of the lemma.
\end{proof}

\begin{definition}[\!\cite{Chuchman-Gutik-2010}]\label{definitio-3.6}
We shall say that a semigroup $S$ has
the $\textsf{F}$-\emph{property} if for every $a,b,c,d\in S^1$ the sets $\{x\in S\mid a\cdot x=b\}$ and $\{x\in S\mid x\cdot
     c=d\}$ are finite.
\end{definition}

Lemma~\ref{lemma-3.7} was proved in \cite{Gutik-KhylynskyiP=2022} and it shows that on the semigroup $T$ with the $\textsf{F}$-property there exists a Hausdorff compact shift-continuous  topology $\tau_{\operatorname{\textsf{Ac}}}$.

\begin{lemma}[\!\!{\cite{Gutik-KhylynskyiP=2022}}]\label{lemma-3.7}
Let $T$ be a semigroup with the $\textsf{F}$-property and $T^0$ be the semigroup $T$ with an adjoined zero. Let $\tau_{\operatorname{\textsf{Ac}}}$ be the topology on $T^0$  such that
\begin{itemize}
  \item[$(i)$] every element of $T$ is an isolated point in the space $(T^{0},\tau_{\operatorname{\textsf{Ac}}})$;
  \item[$(ii)$] the family $\mathscr{B}({0})=\left\{U\subseteq T^{{0}}\colon U\ni {0} \hbox{~and~} T^{{0}}\setminus U \hbox{~is finite}\right\}$ determines a base of the topology $\tau_{\operatorname{\textsf{Ac}}}$ at zero ${0}\in T^{{0}}$.
\end{itemize}
Then $(T^{0},\tau_{\operatorname{\textsf{Ac}}})$ is a Hausdorff compact semitopological  semigroup.
\end{lemma}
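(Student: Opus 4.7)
The plan is to verify in order that $\tau_{\operatorname{\textsf{Ac}}}$ really is a topology, that it is Hausdorff, that it is compact, and finally that both translations are continuous; the F-property is used only in the last step, while the first three parts are bookkeeping about cofinite neighborhoods of a single non-isolated point.

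First I would check that $\mathscr{B}(0)$ generates a topology at $0$: it is closed under finite intersections because a finite union of finite sets is finite. Hausdorffness is then straightforward: distinct points of $T$ are separated by their singletons, and for any $x\in T$ the sets $\{x\}$ and $T^{0}\setminus\{x\}$ separate $x$ from $0$, since the latter lies in $\mathscr{B}(0)$. Compactness is almost tautological: given an open cover $\mathcal{U}$, pick $U_{0}\in\mathcal{U}$ with $0\in U_{0}$; then $T^{0}\setminus U_{0}$ is finite, so finitely many further members of $\mathcal{U}$ cover it, producing a finite subcover of $T^{0}$.

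The core step is separate continuity. Fix $a\in T^{0}$ and consider the left translation $L_{a}\colon T^{0}\to T^{0}$, $x\mapsto a\cdot x$. Continuity at any isolated point $x\in T$ is automatic, so the only issue is continuity at $0$. When $a=0$ the map $L_{a}$ is constant equal to $0$ and there is nothing to check, so assume $a\in T$. Given a basic neighborhood $V$ of $0$, the set $T^{0}\setminus V$ is a finite subset of $T$, and
\begin{equation*}
L_{a}^{-1}(T^{0}\setminus V)=\bigcup_{y\in T^{0}\setminus V}\{x\in T^{0}\colon a\cdot x=y\}.
\end{equation*}
Each $y$ in this finite index set is nonzero, so no $x$ satisfying $a\cdot x=y$ can equal $0$; hence $\{x\in T^{0}\colon a\cdot x=y\}=\{x\in T\colon a\cdot x=y\}$, which is finite by the F-property applied with $a,y\in T\subseteq T^{1}$. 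Therefore $L_{a}^{-1}(V)$ is cofinite, hence a neighborhood of $0$, proving left continuity. The argument for right translation $R_{a}$ is symmetric, using the other half of the F-property.

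The only genuinely delicate point is making sure that passing from $T$ to $T^{0}$ does not create an additional solution $x=0$ to the equation $a\cdot x=y$; this is guaranteed precisely by the observation that $y\ne 0$ for each $y\in T^{0}\setminus V$. Once that is noted, the F-property, stated inside $T$ via the $S^{1}$-indexed parameters, transfers cleanly to the cofinite topology on $T^{0}$, and the whole statement reduces to standard cofinite-topology arguments.
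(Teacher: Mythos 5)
Your proof is correct and complete: the verification that cofinite neighbourhoods of $0$ give a Hausdorff compact topology is routine, and you correctly isolate the one substantive point, namely that the $\textsf{F}$-property makes $L_a^{-1}(T^0\setminus V)$ a finite set not containing $0$, so that $L_a^{-1}(V)$ is a cofinite open neighbourhood of $0$. The paper itself gives no proof of this lemma --- it is quoted from the reference \cite{Gutik-KhylynskyiP=2022} --- and your argument is the standard one that such a proof would take, so there is nothing to object to.
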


\begin{remark}\label{remark-3.8}
By Theorem~\ref{theorem-2.3} the discrete topology is a unique Hausdorff  shift-continuous topology on a semigroup $T$. So $\tau_{\operatorname{\textsf{Ac}}}$ is the unique compact shift-continuous topology on $T$.
\end{remark}

\begin{theorem}\label{theorem-3.9}
Let $\mathscr{F}$ be a family of inductive non-empty subsets of $\omega$ and $S$ be the semigroup  $\boldsymbol{B}_{\omega}^{\mathscr{F}}$  with an adjoined zero. Then every Hausdorff locally compact shift-continuous topology on $S$ is either compact or discrete.
\end{theorem}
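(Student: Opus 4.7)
The plan is to dispose of the theorem by a dichotomy on whether the adjoined zero $\boldsymbol{0}$ is isolated in $(S,\tau)$, using Theorem~\ref{theorem-2.3} for one branch and the entire chain of Lemmas~\ref{lemma-3.1}--\ref{lemma-3.5} for the other.

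First, suppose $\boldsymbol{0}$ is an isolated point of $(S,\tau)$. Then $\boldsymbol{B}_{\omega}^{\mathscr{F}}=S\setminus\{\boldsymbol{0}\}$ is open, and as a subspace it inherits from $\tau$ a Hausdorff shift-continuous topology (continuity of left and right translations by elements of $\boldsymbol{B}_{\omega}^{\mathscr{F}}$ restricts from $S$, since $\boldsymbol{B}_{\omega}^{\mathscr{F}}$ is a subsemigroup). Theorem~\ref{theorem-2.3} then forces that inherited topology to be discrete, and combined with the isolation of $\boldsymbol{0}$ this makes $(S,\tau)$ discrete. No appeal to local compactness is needed in this branch, since Theorem~\ref{theorem-2.3} is purely a shift-continuity statement.

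Second, suppose $\boldsymbol{0}$ is not isolated. Then the running hypotheses prefacing Lemmas~\ref{lemma-3.1}--\ref{lemma-3.5} are in force, and the decisive consequence is Lemma~\ref{lemma-3.5}: every open neighbourhood $U(\boldsymbol{0})$ of $\boldsymbol{0}$ in $S$ has finite complement. From here compactness is immediate by the standard Alexandrov-type argument: given any open cover $\mathcal{U}$ of $S$, choose $U_{0}\in\mathcal{U}$ with $\boldsymbol{0}\in U_{0}$; then $S\setminus U_{0}$ is finite, so finitely many further members of $\mathcal{U}$ suffice to cover it, yielding a finite subcover.

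In short, the theorem is essentially a packaging step once Lemmas~\ref{lemma-3.1}--\ref{lemma-3.5} have been established: those lemmas have already shown that, when $\boldsymbol{0}$ is a non-isolated limit, $(S,\tau)$ is forced to coincide with the one-point compactification of the discrete space $\boldsymbol{B}_{\omega}^{\mathscr{F}}$. The genuine difficulty was concentrated in the inductive passage across the sets $[k)$ in Lemma~\ref{lemma-3.4} and the sandwich argument in Lemma~\ref{lemma-3.5}; at the level of Theorem~\ref{theorem-3.9} itself there is no further obstacle beyond making the case split cleanly and confirming that the isolated-zero branch genuinely yields discreteness.
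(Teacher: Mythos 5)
Your proposal is correct and follows exactly the same route as the paper: the paper's proof is precisely the case split on whether $\boldsymbol{0}$ is isolated, invoking Theorem~\ref{theorem-2.3} in the isolated case and Lemma~\ref{lemma-3.5} in the non-isolated case. Your additional details (that the open subsemigroup $\boldsymbol{B}_{\omega}^{\mathscr{F}}$ inherits a Hausdorff shift-continuous topology, and the finite-complement-of-neighbourhoods argument for compactness) are accurate fillings-in of steps the paper leaves implicit.
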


\begin{proof}
In the case when zero of $S$ is an isolated point of $S$ the statement of the theorem follows from Theorem~\ref{theorem-2.3}. If zero of $S$ is a non-isolated point of $S$ then we apply Lemma~\ref{lemma-3.5}.
\end{proof}

Since the bicyclic monoid embeds into no Hausdorff compact topological semigroup \cite{Anderson-Hunter-Koch-1965} and by Proposition~3 of \cite{Gutik-Mykhalenych=2020} the semigroup  $\boldsymbol{B}_{\omega}^{\mathscr{F}}$ contains an isomorphic copy of the bicyclic monoid, Theorem~\ref{theorem-3.9} implies the following theorem.

\begin{theorem}\label{theorem-3.10}
Let $\mathscr{F}$ be a  family of inductive non-empty subsets of $\omega$ and $S$ be the semigroup  $\boldsymbol{B}_{\omega}^{\mathscr{F}}$  with an adjoined zero. Then every Hausdorff locally compact semigroup topology on $S$ is  discrete.
\end{theorem}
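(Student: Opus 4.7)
The plan is to deduce this theorem as a short corollary of Theorem~\ref{theorem-3.9} together with the classical Anderson--Hunter--Koch obstruction for the bicyclic monoid. Let $\tau$ be a Hausdorff locally compact semigroup topology on $S$. Since every semigroup topology is in particular shift-continuous, Theorem~\ref{theorem-3.9} applies and yields the dichotomy: $\tau$ is either compact or discrete. Thus it suffices to rule out the compact alternative.

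Suppose, toward a contradiction, that $(S,\tau)$ is compact. Then $(S,\tau)$ is a Hausdorff compact topological semigroup (joint continuity is part of the hypothesis). Fix any non-empty $F\in\mathscr{F}$ (for instance $F=[0)$, which we may assume lies in $\mathscr{F}$ by the normalization discussed in the introduction). By Proposition~3 of \cite{Gutik-Mykhalenych=2020}, the subset
\begin{equation*}
  \boldsymbol{B}_{\omega}^{\{F\}}=\{(i,j,F)\colon i,j\in\omega\}
\end{equation*}
is a subsemigroup of $\boldsymbol{B}_{\omega}^{\mathscr{F}}$ algebraically isomorphic to the bicyclic monoid. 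Endowing $\boldsymbol{B}_{\omega}^{\{F\}}$ with the subspace topology inherited from $(S,\tau)$ makes it a Hausdorff topological subsemigroup sitting inside the Hausdorff compact topological semigroup $(S,\tau)$. By the Anderson--Hunter--Koch theorem \cite{Anderson-Hunter-Koch-1965}, however, the bicyclic monoid does not embed as a subsemigroup into any Hausdorff compact topological semigroup, which is the desired contradiction.

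Therefore the compact alternative of Theorem~\ref{theorem-3.9} cannot occur, and $\tau$ is discrete. No serious obstacle arises: the only thing one needs to observe carefully is that passing from the semitopological setting of Theorem~\ref{theorem-3.9} to the topological setting hypothesised here is exactly what activates the Anderson--Hunter--Koch obstruction, since their result requires joint continuity of the multiplication.
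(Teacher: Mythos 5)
Your proposal is correct and follows essentially the same route as the paper: the authors also derive Theorem~\ref{theorem-3.10} from the dichotomy of Theorem~\ref{theorem-3.9} combined with the Anderson--Hunter--Koch fact that the bicyclic monoid embeds into no Hausdorff compact topological semigroup, using Proposition~3 of \cite{Gutik-Mykhalenych=2020} to locate a copy of the bicyclic monoid inside $\boldsymbol{B}_{\omega}^{\mathscr{F}}$. Your write-up merely makes explicit the details the paper leaves to the reader.
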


\begin{remark}\label{remark-3.11}
On the other hand, in \cite{Gutik=2015} is constructed the \v{C}ech-complete non-discrete metrizable semigroup topology on the bicyclic semigroup with the adjoined zero.
\end{remark}

We need the following simple lemma, which is implied from separate continuity of the semigroup operation in semitopological semigroups.

\begin{lemma}\label{lemma-3.12}
Let $X$ be a Hausdorff semitopological semigroup and $I$ be a compact ideal in $X$. Then the Rees-quotient semigroup $X/I$ with the quotient topology is a Hausdorff semitopological semigroup.
\end{lemma}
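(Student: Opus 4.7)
The plan is to verify two properties of $X/I$ separately: the Hausdorff separation axiom and separate continuity of the induced operation. Let $\pi\colon X\to X/I$ denote the canonical surjection, which collapses $I$ to a single point (call it $\mathbf{0}$) and restricts to a bijection on $X\setminus I$; by construction $\pi$ is a semigroup homomorphism. As a preliminary observation, since $I$ is a compact subset of the Hausdorff space $X$, it is closed, so $X\setminus I$ is open in $X$.

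For Hausdorffness, take two distinct points $[x]\neq[y]$ of $X/I$. If both are non-zero, I would pick disjoint open neighbourhoods of their representatives in $X$ and intersect each with $X\setminus I$; the resulting sets are $\pi$-saturated (their $\pi$-preimages equal themselves), so they project to disjoint open neighbourhoods of $[x]$ and $[y]$. If $[y]=\mathbf{0}$ and $[x]\neq\mathbf{0}$, I would invoke the standard fact that in a Hausdorff space a point can be separated from a disjoint compact set to find disjoint open sets $U\ni x$ and $V\supseteq I$. Then $\pi^{-1}(\pi(U))=U$ (since $U\cap I=\varnothing$) and $\pi^{-1}(\pi(V))=V$ (since $I\subseteq V$), so $\pi(U)$ and $\pi(V)$ are the required disjoint open neighbourhoods in $X/I$.

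For separate continuity, fix $c\in X/I$ and choose any representative $a\in\pi^{-1}(c)$. Because $\pi$ is a semigroup homomorphism, the left translation by $c$ on $X/I$ satisfies the commutation $\lambda_{c}\circ\pi=\pi\circ\lambda_{a}$. Continuity of $\lambda_{a}$ on $X$ (from separate continuity of the operation in $X$) and of $\pi$ gives continuity of $\lambda_{c}\circ\pi$, and the universal property of the quotient topology then yields continuity of $\lambda_{c}$. Right translations are handled identically with $\rho_{a}$ in place of $\lambda_{a}$. The only step where compactness of $I$ is actually used is the Hausdorff separation of $\mathbf{0}$ from a non-zero point; every other step is a routine application of the universal property of the quotient map combined with the fact that $\pi$ is a continuous homomorphism, so I would not anticipate any genuine obstacle in the proof.
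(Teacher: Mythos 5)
Your proof is correct and complete. The paper offers no argument for this lemma at all (it is merely declared ``simple'' and ``implied from separate continuity''), and your two-part verification --- Hausdorffness via separating a point from the compact set $I$ by disjoint $\pi$-saturated open sets, and separate continuity via the identity $\lambda_{c}\circ\pi=\pi\circ\lambda_{a}$ together with the universal property of the quotient map --- is exactly the standard argument that fills this gap, with compactness of $I$ correctly isolated as the only non-routine ingredient.
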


The proof of the following lemma is simple (see \cite{Gutik-KhylynskyiP=2022}).

\begin{lemma}\label{lemma-3.13}
Let $X$ be a Hausdorff locally compact space and $I$ be a compact subset of $X$. Then there exists an open neighbourhood $U(I)$ of $I$ with the compact closure $\overline{U(I)}$.
\end{lemma}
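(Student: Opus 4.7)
The plan is to reduce the statement for the compact set $I$ to the pointwise local compactness assumption and then use a finite-subcover argument. In a Hausdorff locally compact space, the standard equivalent formulation is that every point has an open neighbourhood with compact closure; this is the form of local compactness I would invoke.

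First I would take, for each point $x \in I$, an open neighbourhood $V_x$ of $x$ in $X$ with compact closure $\overline{V_x}$. The collection $\{V_x : x \in I\}$ is then an open cover of the compact set $I$. By compactness of $I$, I can extract a finite subcover $V_{x_1},\ldots,V_{x_n}$ and define
\begin{equation*}
U(I)=V_{x_1}\cup V_{x_2}\cup\cdots\cup V_{x_n}.
\end{equation*}
Then $U(I)$ is open in $X$ and contains $I$.

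To finish, I would compute
\begin{equation*}
\overline{U(I)}=\overline{V_{x_1}\cup\cdots\cup V_{x_n}}=\overline{V_{x_1}}\cup\cdots\cup\overline{V_{x_n}},
\end{equation*}
using the fact that closure distributes over finite unions. Since each $\overline{V_{x_i}}$ is compact and a finite union of compact subsets of a Hausdorff space is compact, $\overline{U(I)}$ is compact, as required.

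There is no real obstacle here; the only point to be mindful of is using the correct characterisation of local compactness (neighbourhoods with compact closures, which is available because $X$ is Hausdorff) rather than the weaker version stating only that every point has a compact neighbourhood. Given the Hausdorff hypothesis in the lemma, this equivalence is immediate and the argument is routine.
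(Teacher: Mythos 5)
Your proof is correct and is exactly the standard argument: cover $I$ by open sets with compact closures, extract a finite subcover, and use that closure commutes with finite unions. The paper itself omits the proof entirely (citing an external reference as ``simple''), and your argument is the expected one, so there is nothing to add.
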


\begin{theorem}\label{theorem-3.14}
Let $\mathscr{F}$ be a  family of inductive non-empty subsets of $\omega$.
Let $(S_I,\tau)$ be a Hausdorff locally compact semitopological semigroup, where $S_I=\boldsymbol{B}_{\omega}^{\mathscr{F}}\sqcup I$ and $I$ is a compact ideal of $S_I$. Then either $(S_I,\tau)$ is a compact semitopological semigroup or the ideal $I$ is open.
\end{theorem}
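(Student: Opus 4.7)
The strategy is to pass to the Rees quotient $S_I/I$ and apply Theorem~\ref{theorem-3.9}. Algebraically, $S_I/I$ is precisely $\boldsymbol{B}_{\omega}^{\mathscr{F}}$ with an adjoined zero (the collapsed ideal $I$ playing the role of the zero), so the dichotomy of Theorem~\ref{theorem-3.9} will apply to the quotient and can then be transferred back to $S_I$.

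I would first verify that the quotient topology on $S_I/I$ satisfies the hypotheses of Theorem~\ref{theorem-3.9}. Hausdorffness and separate continuity are provided by Lemma~\ref{lemma-3.12}. For local compactness at the zero point $0\in S_I/I$, apply Lemma~\ref{lemma-3.13} to obtain an open neighbourhood $U(I)$ of $I$ in $S_I$ with compact closure; since $U(I)\supseteq I$, the set $U(I)$ is saturated with respect to the quotient map $q\colon S_I\to S_I/I$, so its image $q(U(I))$ is open in $S_I/I$ and sits inside the compact set $q(\overline{U(I)})$. For local compactness at any other point, note that $I$ is closed (being compact in a Hausdorff space), so $\boldsymbol{B}_{\omega}^{\mathscr{F}}=S_I\setminus I$ is an open subspace; as a Hausdorff shift-continuous subsemigroup it is discrete by Theorem~\ref{theorem-2.3}, and $q$ restricted to this saturated open set is a homeomorphism onto an open subset of $S_I/I$, so those points remain isolated in the quotient.

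Theorem~\ref{theorem-3.9} then gives two cases. If $S_I/I$ is discrete, then $\{0\}$ is open in the quotient, whence $I=q^{-1}(\{0\})$ is open in $S_I$, as required. If $S_I/I$ is compact, I claim $(S_I,\tau)$ is itself compact: given any open cover of $S_I$, compactness of $I$ yields finitely many members whose union $W$ covers $I$; since $W$ is saturated and open, $q(W)$ is open in $S_I/I$, and its complement, being closed in the compact space $S_I/I$ and contained in the discrete subspace $\boldsymbol{B}_{\omega}^{\mathscr{F}}$, must be finite. Hence $S_I\setminus W$ is finite and can be covered by finitely many additional members of the given cover.

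The principal obstacle is arranging that the Rees quotient inherits \emph{both} local compactness at the zero and the isolated-point structure on the $\boldsymbol{B}_{\omega}^{\mathscr{F}}$-side, which is what allows Theorem~\ref{theorem-3.9} to be invoked; once this is in place, the discreteness-to-openness passage and the extraction of a finite subcover of $S_I$ from one of $S_I/I$ reduce to elementary bookkeeping with saturated neighbourhoods of $I$.
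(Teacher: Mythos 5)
Your proposal is correct and follows essentially the same route as the paper: pass to the Rees quotient $S_I/I$ via Lemma~\ref{lemma-3.12}, use Lemma~\ref{lemma-3.13} together with the discreteness of $\boldsymbol{B}_{\omega}^{\mathscr{F}}$ (Theorem~\ref{theorem-2.3}) to see that the quotient is a Hausdorff locally compact semitopological copy of $\boldsymbol{B}_{\omega}^{\mathscr{F}}$ with adjoined zero, and then invoke the dichotomy of Theorem~\ref{theorem-3.9}, transferring compactness back to $S_I$ by combining a saturated compact-and-open neighbourhood of $I$ with a finite remainder. The only cosmetic difference is that the paper starts by assuming $I$ is not open, whereas you treat both branches of the dichotomy explicitly.
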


\begin{proof}
Suppose that $I$ is not open. By Lemma~\ref{lemma-3.12} the Rees-quotient semigroup $S_I/I$ with the quotient topology $\tau_{\operatorname{\textsf{q}}}$ is a semitopological semigroup. Let $\pi\colon S_I\to S_I/I$ be the natural homomorphism which is a quotient map.
Since the Rees-quotient semigroup $S_I/I$ is naturally isomorphic to the semigroup $S$, without loss of generality we can assume that $\pi(S_I)=S$ and the image $\pi(I)$ is the zero $\boldsymbol{0}$ of $S$.

By Lemma~\ref{lemma-3.13} there exists an open neighbourhood $U(I)$ of $I$ with the compact closure $\overline{U(I)}$. Since by Theorem~\ref{theorem-2.3} every point of $\boldsymbol{B}_{\omega}^{\mathscr{F}}$ is isolated in $(S_I,\tau)$ we have that $\overline{U(I)}=U(I)$ and its image $\pi(U(I))$ is compact-and-open neighbourhood of zero in $S$. Since for any open neighbourhood $V(I)$ of $I$ in $(S_I,\tau)$ the set $\overline{U(I)\cap V(I)}$ is compact, Theorem~\ref{theorem-3.9} implies that $S\setminus\pi(U(I))$ is finite for any compact-and-open neighbourhood $U(I)$ of $I$ in $(S_I,\tau)$. Then compactness of $I$ implies that $(S_I,\tau)$ is compact as well.
\end{proof}

Since the bicyclic monoid embeds into no Hausdorff compact topological semigroup \cite{Anderson-Hunter-Koch-1965} and by Proposition~3 of \cite{Gutik-Mykhalenych=2020} the semigroup  $\boldsymbol{B}_{\omega}^{\mathscr{F}}$ contains an isomorphic copy of the bicyclic monoid, Theorem~\ref{theorem-3.14} implies

\begin{theorem}\label{theorem-3.15}
Let $\mathscr{F}$ be an $\omega$-closed family of inductive non-empty subsets of $\omega$.
Let $(S_I,\tau)$ be a Hausdorff locally compact topological semigroup, where $S_I=\boldsymbol{B}_{\omega}^{\mathscr{F}}\sqcup I$ and $I$ is a compact ideal of $S_I$. Then the ideal $I$ is open.
\end{theorem}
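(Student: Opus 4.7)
The plan is to derive Theorem~\ref{theorem-3.15} as a direct consequence of Theorem~\ref{theorem-3.14} together with the classical Anderson--Hunter--Koch result cited in \cite{Anderson-Hunter-Koch-1965}. First, I would observe that every topological semigroup is a semitopological semigroup, so the hypotheses of Theorem~\ref{theorem-3.14} are satisfied by $(S_I,\tau)$. This gives the dichotomy: either $(S_I,\tau)$ is a compact semitopological semigroup, or the ideal $I$ is open. The goal is to rule out the first alternative using the fact that $\tau$ is now a (full) semigroup topology, not merely shift-continuous.

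Second, I would argue by contradiction: assume $(S_I,\tau)$ is compact. Since $\tau$ makes multiplication jointly continuous, $(S_I,\tau)$ is a Hausdorff compact topological semigroup. By Proposition~3 of \cite{Gutik-Mykhalenych=2020}, for every $F\in\mathscr{F}$ the subset $\boldsymbol{B}_{\omega}^{\{F\}}$ of $\boldsymbol{B}_{\omega}^{\mathscr{F}}\subseteq S_I$ is algebraically isomorphic to the bicyclic monoid; in particular, $S_I$ contains an isomorphic copy of $\mathscr{C}(p,q)$ as an abstract subsemigroup. The topological closure of this copy inside $(S_I,\tau)$ is then a closed subsemigroup of a Hausdorff compact topological semigroup, hence itself a Hausdorff compact topological semigroup containing an algebraic copy of the bicyclic monoid. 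This directly contradicts the theorem of Anderson, Hunter and Koch \cite{Anderson-Hunter-Koch-1965} which asserts that the bicyclic monoid embeds into no Hausdorff compact topological semigroup.

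The contradiction eliminates the compact alternative in the dichotomy, leaving only the conclusion that $I$ is open, which is exactly the statement of the theorem. I do not expect any serious obstacle: all the heavy lifting is done by Theorem~\ref{theorem-3.14} (the local-compact dichotomy) and by the classical embedding obstruction for the bicyclic monoid; the only care needed is to note the upgrade from ``semitopological'' in Theorem~\ref{theorem-3.14} to ``topological'' in the present hypothesis, since it is precisely the joint continuity of multiplication that allows the Anderson--Hunter--Koch obstruction to be invoked.
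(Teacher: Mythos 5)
Your proposal is correct and follows exactly the paper's route: the paper also deduces Theorem~\ref{theorem-3.15} from the dichotomy of Theorem~\ref{theorem-3.14} by excluding the compact alternative via the Anderson--Hunter--Koch obstruction, using Proposition~3 of \cite{Gutik-Mykhalenych=2020} to locate a copy of the bicyclic monoid inside $\boldsymbol{B}_{\omega}^{\mathscr{F}}$. Your extra remark that joint continuity is what licenses the appeal to \cite{Anderson-Hunter-Koch-1965} is exactly the right point of care.
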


\section*{Acknowledgements}

The authors are grateful to the referee and the editor for several useful comments and suggestions.


\end{document}